%
%

\documentclass[11pt]{article}

\usepackage{latexsym,color,amsmath,amsthm,amssymb,amscd,amsfonts}
\usepackage{graphicx}
\usepackage{cancel}

\setlength{\topmargin}{-0.45 in}     
\setlength{\oddsidemargin}{0.3in}  
\setlength{\evensidemargin}{0.3in} 
\setlength{\textheight}{9in}
\setlength{\textwidth}{6.1in} 
\setlength{\footskip}{0.55in}  

\newtheorem{conj}{Conjecture}[section]
\newtheorem{theorem}[conj]{Theorem}

\newtheorem{remark}[conj]{Remark}
\newtheorem{lemma}[conj]{Lemma}

\newcommand\independent{\protect\mathpalette{\protect\independent}{\perp}} 
\def\independent#1#2{\mathrel{\rlap{$#1#2$}\mkern2mu{#1#2}}}

\newcommand{\supp}{\mathrm{Supp}}

\newcommand{\R}{\mathbb{R}}

\renewcommand{\P}{\mathbb{P}}

\newcommand{\E}{\mathbb{E}}

\DeclareMathOperator{\Var}{Var}

\newcommand{\Z}{\mathcal{Z}}




\newcommand{\eps}{\varepsilon}

\renewcommand{\Z}{\mathbb{Z}}

\newcommand{\N}{\mathbb{N}}

\date{\vspace{-5ex}}

\author{Arnaud Marsiglietti and Puja Pandey}

\title{A Note on Statistical Distances for Discrete Log-Concave Measures}

\begin{document}

\maketitle

\begin{abstract}

    In this note we explore how standard statistical distances are equivalent for discrete log-concave distributions. Distances include total variation distance, Wasserstein distance, and $f$-divergences.

\end{abstract}

\vskip5mm
\noindent
{\bf Keywords:} log-concave, total variation distance, Wasserstein distance, $f$-divergence.

\section{Introduction}

The study of convergence of probability measures is central in probability and statistics, and may be performed via statistical distances for which the choice has its importance (see, e.g., \cite{GS02}, \cite{Rac}). The space of probability measures, say over the real numbers, is infinite dimensional, therefore there is a priori no canonical distance, and distances may not be equivalent. Nonetheless, an essential contribution made by Meckes and Meckes in \cite{MM} demonstrates that certain statistical distances between continuous log-concave distributions turn out to be equivalent up to constants that may depend on the dimension of the ambient space (see also \cite{CG} for improved bounds, and \cite{MP} for the extension to the broader class of so-called $s$-concave distributions).

The goal of this note is to develop quantitative comparisons between distances for discrete log-concave distributions. Let us denote by $\N = \{0, 1, 2, \dots\}$ the set of natural numbers and by $\Z$ the set of integers. Recall that the probability mass function (p.m.f.) associated with an integer valued random variable $X$ is
$$ p(k) = \mathbb{P}(X = k), \quad k \in \Z. $$
An integer-valued random variable $X$ is said to be log-concave if its probability mass function $p$ satisfies
$$ p(k)^{2} \geq p(k-1)p(k+1) $$
for all $k \in \mathbb{Z}$ and the support of $X$ is an integer interval. 

Discrete log-concave distributions form an important class. Examples include Bernoulli, discrete uniform, binomial, geometric and Poisson distributions. We refer to \cite{Sta}, \cite{Bre94}, \cite{SW}, \cite{Bra15}  for further background on log-concavity.

Let us introduce the main distances we will work with (we refer to \cite{LV}, \cite{Dud}, \cite{GS02}, \cite{Rac} for further background on statistical distances). Our setting is the real line $\R$ equipped with its usual Euclidean structure $d(x,y) = |x-y|$, $x,y \in \R$.

\begin{enumerate}

\item The bounded Lipschitz distance between two probability measures $\mu$ and $\nu$ is defined as
$$ d_{BL}(\mu,\nu) = \sup_{\|g\|_{BL} \leq 1} \left| \int g \, d\mu - \int g \, d\nu  \right|, $$
where for a function $g \colon \R \to \R$,
$$ \|g\|_{BL} = \max \left\{ \|g\|_{\infty}, \, \sup_{x \neq y} \frac{|g(x)-g(y)|}{|x-y|} \right\}. $$

  \item The L\'evy-Prokhorov distance between two probability measures $\mu$ and $\nu$ is defined as
\begin{equation*}
    d_{LP}(\mu,\nu) = \inf \left\{ \epsilon > 0 : \mu(A) \leq \nu(A^{\eps}) + \epsilon \mbox{ for all Borel set } A \subset \R \right\},
\end{equation*}
where $A^{\eps} = \{x \in \R : d(x,A) < \eps\}$.

\hspace*{5mm} Using the Ky-Fan distance, which is defined for two random variables $X$ and $Y$ as
$$ K(X,Y) = \inf \{ \eps > 0 : \P(|X-Y| > \eps) < \eps \}, $$
the L\'evy-Prokhorov distance admits the following coupling representation,
\begin{equation}\label{LV-coupl}
d_{LP}(\mu,\nu)= \inf K(X,Y),
\end{equation}
where the infimum runs over all random variables $X$ with distribution $\mu$ and random variables $Y$ with distribution $\nu$ (see, e.g., \cite{Rac}).

    \item The total variation distance between two probability measures $\mu$ and $\nu$ is defined as
\begin{align*}
    d_{TV}(\mu,\nu) = 2 \sup_{A \subset \R} |\mu(A) - \nu(A)|.
\end{align*}

\hspace*{5mm} The total variation distance admits the following coupling representation,
\begin{equation}\label{d-tv-coupl}
d_{TV}(\mu,\nu) = \inf \P(X \neq Y),
\end{equation}
where the infimum runs over all random variables $X$ with distribution $\mu$ and random variables $Y$ with distribution $\nu$ (see, e.g., \cite{GS02}). Moreover, for integer valued measures, one has the following identity,
\begin{equation}\label{d-tv-dis}
d_{TV}(\mu,\nu) = \sum_{k \in \Z} |\mu(\{k\}) - \nu(\{k\})|.
\end{equation}

\item The $p$-th Wasserstein distance, $p \geq 1$, between two probability measures $\mu$ and $\nu$ is defined as
    \begin{align*}
        W_{p}(\mu,\nu) = \inf \mathbb{E}[|X-Y|^{p}]^{\frac{1}{p}},
    \end{align*}
    where the infimum runs over all random variables $X$ with distribution $\mu$ and random variables $Y$ with distribution $\nu$.

\item Let $f \colon [0, +\infty) \to \R$ be a convex function such that $f(1) = 0$. The $f$-divergence between two probability measures $\mu$ and $\nu$ on $\Z$ is defined as
$$ d_f(\mu || \nu) = \sum_{k \in \Z} \nu(\{k\}) f\left( \frac{\mu(\{k\})}{\nu(\{k\})} \right). $$

Note that the choice of convex function $f(x) = x\log(x)$, $x \geq 0$, leads to the Kullback-Leibler divergence
$$ D(\mu || \nu) = \sum_{k \in \Z} \mu(\{k\}) \log \left( \frac{\mu(\{k\})}{\nu(\{k\})}  \right), $$
the function $f(x) = (x-1)^2$ yields the so-called $\chi^2$-divergence, while $f(x) = |x-1|$ returns us to the total variation distance.

\end{enumerate}

Let us review the known relationships between the above distances. It is known \cite[Corollaries 2 and 3]{Dud68} that bounded Lipschitz and L\'evy-Prokhorov distances are equivalent,
$$ \frac{1}{2} d_{BL}(\mu,\nu) \leq d_{LP}(\mu,\nu) \leq \sqrt{\frac{3}{2} d_{BL}(\mu,\nu)}. $$
One also has
$$ d_{LP}(\mu,\nu) \leq d_{TV}(\mu,\nu), $$
and, for $\mu, \nu$ integer valued probability measures,
$$ d_{TV}(\mu,\nu) \leq W_1(\mu,\nu), $$
see \cite{GS02}. By H\"older's inequality, if $p \leq q$, then
$$ W_{p}(\mu,\nu) \leq W_{q}(\mu,\nu). $$
As for divergences, the Pinsker-Csisz\'ar inequality (\cite{Pin}, \cite{C}) states that
$$ d_{TV}(\mu,\nu) \leq \sqrt{2 D(\mu || \nu)}. $$
Also, one has
$$ D(\mu || \nu) \leq \log(1 + \chi^2(\mu || \nu)). $$

The article is organized as follows. In Section \ref{2}, we establish properties for log-concave distributions on $\Z$ that are of independent interests. In section \ref{3}, we present our results and proofs.

\section{Preliminaries}\label{2}

In this section we gather the main tools used throughout the proofs. First, recall that a real-valued random variable $X$ with finite first moment is said to be centered if
$$ \E[X] = 0. $$
We start with an elementary lemma that will allow us to pass results for log-concave distributions on $\N$ to log-concave distributions on $\Z$, however with sub-optimal constants.

\begin{lemma}\label{sym}

If $X$ is symmetric log-concave on $\Z$, then $|X|$ is log-concave on $\N$.

\end{lemma}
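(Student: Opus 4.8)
The plan is to work directly at the level of probability mass functions. Write $p(k) = \P(X=k)$ for the p.m.f. of $X$, so that symmetry gives $p(k) = p(-k)$ for all $k \in \Z$, and let $q(k) = \P(|X| = k)$ denote the p.m.f. of $|X|$ on $\N$. First I would record the elementary relation between the two:
$$ q(0) = p(0), \qquad q(k) = p(k) + p(-k) = 2p(k) \quad (k \geq 1). $$
Since $X$ is log-concave, its support is an integer interval, and symmetry forces that interval to be of the form $\{-m, \dots, m\}$ (or all of $\Z$). Consequently the support of $|X|$ is $\{0, 1, \dots, m\}$ (or all of $\N$), which is an integer interval in $\N$, so the support condition in the definition of discrete log-concavity is automatic. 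It then remains only to verify the inequality $q(k)^2 \geq q(k-1)\,q(k+1)$ for every $k \geq 1$.

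For $k \geq 2$ this is immediate: all three values $q(k-1), q(k), q(k+1)$ carry the same factor $2$, so the inequality $q(k)^2 \geq q(k-1)q(k+1)$ reduces, after cancelling $4$, to $p(k)^2 \geq p(k-1)\,p(k+1)$, which is exactly the log-concavity of $p$ at $k$. The only genuinely delicate index is the boundary $k=1$, where the factors of $2$ no longer match: here $q(1) = 2p(1)$ and $q(2) = 2p(2)$ but $q(0) = p(0)$, so the desired inequality becomes $4p(1)^2 \geq 2\,p(0)\,p(2)$, that is, $2\,p(1)^2 \geq p(0)\,p(2)$.

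I expect this boundary case to be where all the content sits, but it resolves with room to spare: log-concavity of $p$ at $k=1$ gives $p(1)^2 \geq p(0)\,p(2)$, whence
$$ 2\,p(1)^2 \geq 2\,p(0)\,p(2) \geq p(0)\,p(2), $$
the last step using $p(0)\,p(2) \geq 0$. This establishes the $k=1$ case and completes the verification. The main obstacle is thus purely the asymmetry in the factors of $2$ created by folding $\Z$ onto $\N$ at the origin; since the fold doubles masses away from $0$ but leaves $p(0)$ untouched, one should double-check that the inequality survives at $k=1$, and it does precisely because the factor $2$ works in our favor rather than against us.
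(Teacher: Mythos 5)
Your proof is correct and takes essentially the same approach as the paper's: the same identities $q(0)=p(0)$ and $q(k)=2p(k)$ for $k \geq 1$, the same reduction of the case $k \geq 2$ to log-concavity of $p$ by cancelling the factors of $4$, and the same observation that at the boundary $k=1$ the inequality $4p(1)^2 \geq 2p(0)p(2)$ holds with a spare factor of $2$. The only difference is that you explicitly verify the support condition, which the paper leaves implicit.
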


\begin{proof}
Denote by $p$ (resp. $q$) the p.m.f. of $X$ (resp. $|X|$). Then, $q(0) = p(0)$ and $q(k) = 2p(k)$ for $k \geq 1$. Therefore, 
$$ q^2(1) = 4p^2(1) \geq 4 p(0) p(2) = 2q(0) q(2), $$
and for all $k \geq 2$,
$$ q^2(k) = 4p^2(k) \geq 4 p(k+1) p(k-1) = q(k+1) q(k-1). $$
Hence, $q$ is log-concave.
\end{proof}

We note that Lemma \ref{sym} no longer holds for non-symmetric log-concave random variables, as can be seen by taking $X$ supported on $\{-1,0,1,2,3\}$ with distribution $\P(X=-1) = \P(X=3) = 0.1$, $\P(X=0) = \P(X=2) = 0.2$, and $\P(X=1) = 0.4$. In this case, $\P(|X| = 2)^2 < \P(|X|=1)\P(|X|=3)$.

The next lemma provides moments bounds for log-concave distributions on $\Z$.

\begin{lemma}\label{moment-lem}

If $X$ is log-concave on $\Z$, then for all $\beta \geq 1$,
$$ \E[|X - \E[X]|^{\beta}]^{\frac{1}{\beta}} \leq \Gamma(\beta + 1)^{\frac{1}{\beta}} (2\E[|X - \E[X]|] + 1). $$

\end{lemma}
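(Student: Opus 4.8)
The plan is to reduce the two-sided bound to a one-sided statement on $\N$ by symmetrization, invoking Lemma \ref{sym}, and then to compare with the geometric/exponential distribution; this is the route that naturally produces the constant $2m+1$. Write $\mu = \E[X]$ and $m = \E[|X-\mu|]$, and let $X'$ be an independent copy of $X$. First I would symmetrize: the difference $W = X - X'$ is symmetric, and since log-concavity on $\Z$ is preserved under convolution (so that $W$, being the sum of the independent log-concave variables $X$ and $-X'$, is again log-concave), $W$ is symmetric log-concave on $\Z$. By conditional Jensen applied to the convex map $x \mapsto |x|^{\beta}$,
$$ \E[|X-\mu|^{\beta}] = \E\big[|\E_{X'}(X-X')|^{\beta}\big] \le \E[|X-X'|^{\beta}], $$
while the triangle inequality gives $\E[|W|] \le 2m$. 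By Lemma \ref{sym}, $V := |W|$ is log-concave on $\N$. Hence it suffices to prove the one-sided bound $\E[V^{\beta}] \le \Gamma(\beta+1)(\E[V]+1)^{\beta}$ for a nonnegative log-concave $V$, since substituting $\E[V] \le 2m$ and taking $\beta$-th roots then yields the claim.

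To prove this one-sided bound I would pass through the survival function $S_n = \P(V \ge n)$. A short computation with the ratios of the p.m.f.\ of $V$ shows that log-concavity forces the discrete hazard rate $q(n)/S_n$ to be non-decreasing, equivalently that $(S_n)_{n\ge 0}$ is itself log-concave. Let $\bar S \colon [0,\infty) \to [0,1]$ be the continuous function whose logarithm is the piecewise-linear interpolation of $n \mapsto \log S_n$; then $\bar S$ is log-concave, non-increasing, $\bar S(0)=1$, and $\bar S(n)=S_n$. Since $\P(V>t) = S_{\lfloor t\rfloor + 1} \le \bar S(t)$, the layer-cake formula gives
$$ \E[V^{\beta}] = \int_0^{\infty} \beta t^{\beta-1}\, \P(V>t)\, dt \le \int_0^{\infty} \beta t^{\beta-1}\, \bar S(t)\, dt = \E[\bar V^{\beta}], $$
where $\bar V$ is the nonnegative variable with survival function $\bar S$. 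Bounding each interpolation integral by $\int_n^{n+1}\bar S \le S_n$ yields $\E[\bar V] = \int_0^{\infty}\bar S \le 1 + \E[V]$, so it remains to show $\E[\bar V^{\beta}] \le \Gamma(\beta+1)(\E[\bar V])^{\beta}$ for $\bar V$, whose survival function is log-concave (an increasing-failure-rate variable).

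The heart of the argument is this last inequality, and it is the step I expect to be the main obstacle. I would prove it by comparison with the exponential survival function $e^{-t/\mu_1}$ having the same mean $\mu_1 = \E[\bar V]$: since $\log\bar S$ is concave and $-t/\mu_1$ is linear, the difference $\bar S(t) - e^{-t/\mu_1}$ changes sign exactly once on $(0,\infty)$, from positive to negative (both functions equal $1$ at $t=0$ and have equal integral $\mu_1$). Testing this single sign change against the non-decreasing weight $\beta t^{\beta-1}$ gives
$$ \int_0^{\infty} \beta t^{\beta-1}\big(\bar S(t) - e^{-t/\mu_1}\big)\, dt \le 0, $$
whence $\E[\bar V^{\beta}] \le \int_0^{\infty}\beta t^{\beta-1} e^{-t/\mu_1}\, dt = \Gamma(\beta+1)\mu_1^{\beta}$. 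Chaining the displays yields $\E[|X-\mu|^{\beta}] \le \E[V^{\beta}] \le \Gamma(\beta+1)(\E[V]+1)^{\beta} \le \Gamma(\beta+1)(2m+1)^{\beta}$, which is the desired estimate after taking $\beta$-th roots. The delicate points to get right are the preservation of log-concavity under convolution, the passage from the discrete survival sequence to its log-linear interpolation (which is exactly what produces the additive $+1$), and the justification of the single-crossing moment comparison.
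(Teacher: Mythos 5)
Your proposal is correct, and its reduction to the one-sided problem is exactly the paper's: symmetrize via an independent copy $X'$ (using stability of discrete log-concavity under convolution), apply conditional Jensen to get $\E[|X-\E[X]|^{\beta}] \le \E[|X-X'|^{\beta}]$, bound $\E[|X-X'|] \le 2\E[|X-\E[X]|]$ by the triangle inequality, and use Lemma \ref{sym} to land on a log-concave variable on $\N$. Where you genuinely diverge is the key step: the paper simply cites \cite[Corollary 4.5]{MM22} for the inequality $\E[V^{\beta}]^{1/\beta} \le \Gamma(\beta+1)^{1/\beta}(\E[V]+1)$ on $\N$, whereas you re-derive it from scratch: hazard-rate monotonicity gives log-concavity of the survival sequence $(S_n)$, the log-linear interpolation $\bar S$ converts the discrete variable into a continuous one with log-concave (IFR) survival function at the cost of the additive $+1$, and a single-crossing (Chebyshev-type) comparison against the exponential law with the same mean yields the $\Gamma(\beta+1)\mu_1^{\beta}$ bound. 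The paper's route is shorter and leans on an established reference; yours is self-contained and makes transparent where the constants come from ($\Gamma(\beta+1)$ from exponential moments, $+1$ from discretization). Two small points to tighten in a full write-up: state and cite the convolution-stability of log-concave sequences (Fekete's theorem; the paper uses it implicitly as well when asserting $X-Y$ is log-concave), and handle the convention for $\bar S$ when $V$ has bounded support so that $S_n=0$ for large $n$ (e.g., set $\bar S \equiv 0$ beyond the last atom); the single-crossing argument survives since the difference with the exponential is negative there, so neither point is a real gap.
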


\begin{proof}
    It has been shown in \cite[Corollary 4.5]{MM22} that for all log-concave random variable $X$ on $\N$, for all $\beta \geq 1$,
    \begin{equation}\label{moment}
    \E[X^{\beta}]^{\frac{1}{\beta}} \leq \Gamma(\beta + 1)^{\frac{1}{\beta}} (\E[X] + 1).
    \end{equation}
    Let $X$ be a symmetric log-concave random variable on $\Z$, then by Lemma \ref{sym}, $|X|$ is log-concave on $\N$ so one may apply inequality \eqref{moment} to obtain
    \begin{equation}\label{moment-sym}
    \E[|X|^{\beta}]^{\frac{1}{\beta}} \leq \Gamma(\beta + 1)^{\frac{1}{\beta}} (\E[|X|] + 1).
    \end{equation}
    Now, let $X$ be a log-concave random variable on $\Z$. Let $Y$ be an independent copy of $X$, so that $X-Y$ is symmetric log-concave. Applying inequality \eqref{moment-sym}, we deduce that
    $$ \E[|X - \E[X]|^{\beta}]^{\frac{1}{\beta}} \leq \E[|X-Y|^{\beta}]^{\frac{1}{\beta}} \leq \Gamma(\beta + 1)^{\frac{1}{\beta}} (\E[|X-Y|] + 1) \leq \Gamma(\beta + 1)^{\frac{1}{\beta}} (2\E[|X - \E[X]|] + 1), $$
    where the first inequality follows from H\"older's inequality and the last inequality from triangle inequality.
\end{proof}

Let us derive concentration inequalities for log-concave distributions on $\Z$.

\begin{lemma}\label{concentration-lem}

For each log-concave random variable $X$ on $\Z$, one has for all $t \geq 0$,
$$ \P(|X - \E[X]| \geq t) \leq 2e^{- \frac{t}{2(2\E[|X - \E[X]|] + 1)}}. $$

\end{lemma}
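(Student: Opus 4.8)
The plan is to derive the exponential tail bound from the moment bound in Lemma~\ref{moment-lem} via a standard moment-to-concentration argument, choosing the exponent of the moment optimally as a function of $t$. Write $\sigma := 2\E[|X - \E[X]|] + 1$ for brevity, so Lemma~\ref{moment-lem} reads $\E[|X-\E[X]|^\beta]^{1/\beta} \le \Gamma(\beta+1)^{1/\beta}\,\sigma$ for all $\beta \ge 1$. The first step is Markov's inequality applied to the $\beta$-th moment: for any $\beta \ge 1$ and $t > 0$,
$$ \P(|X - \E[X]| \ge t) \le \frac{\E[|X-\E[X]|^\beta]}{t^\beta} \le \Gamma(\beta+1)\left(\frac{\sigma}{t}\right)^\beta. $$
Using $\Gamma(\beta+1) \le \beta^\beta$ (valid for $\beta \ge 1$, e.g. since $\Gamma(\beta+1) = \int_0^\infty x^\beta e^{-x}\,dx$ is bounded by $\beta^\beta$ after the substitution peaking the integrand), the right-hand side is at most $(\beta\sigma/t)^\beta$.

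The second step is to optimize over $\beta$. The bound $(\beta\sigma/t)^\beta$ is minimized, over real $\beta > 0$, at $\beta = t/(e\sigma)$, yielding the value $e^{-t/(e\sigma)}$. To respect the constraint $\beta \ge 1$ required by Lemma~\ref{moment-lem}, I would distinguish two regimes. When $t \ge e\sigma$ the optimal $\beta = t/(e\sigma) \ge 1$ is admissible and gives $\P(|X-\E[X]| \ge t) \le e^{-t/(e\sigma)}$. When $t < e\sigma$, the target right-hand side $2e^{-t/(2\sigma)}$ exceeds $2e^{-e/2} > 1$, so the inequality holds trivially since any probability is at most $1$. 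It then remains to check that $e^{-t/(e\sigma)} \le 2e^{-t/(2\sigma)}$ in the nontrivial regime; since $1/e < 1/2$ we have $-t/(e\sigma) > -t/(2\sigma)$ pointwise, so the single-factor bound is actually weaker than what we want, and I would instead use the slightly cruder exponent $t/(2\sigma)$ in the optimization, or simply absorb the constant by noting $e^{-t/(e\sigma)} \le 2 e^{-t/(2\sigma)}$ fails; hence the cleaner route is to choose $\beta$ to target exponent $1/(2\sigma)$ directly.

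Concretely, I would set $\beta = t/(2\sigma)$ whenever this is $\ge 1$, i.e. $t \ge 2\sigma$, and verify $(\beta\sigma/t)^\beta = (1/2)^{t/(2\sigma)} = e^{-(\ln 2)\, t/(2\sigma)} \le e^{-t/(2\sigma)}$ — but since $\ln 2 < 1$ this comparison again goes the wrong way, so the genuinely clean choice is to keep the $e$-optimal $\beta = t/(e\sigma)$ and note that the stated bound with constant $2$ and rate $1/(2(2\E|X-\E X|+1)) = 1/(2\sigma)$ follows because $1/(2\sigma) < 1/(e\sigma)$, giving $e^{-t/(e\sigma)} \le e^{-t/(2\sigma)} \le 2e^{-t/(2\sigma)}$ for $t \ge e\sigma$, while $t < e\sigma$ is handled by the trivial bound $2e^{-e/2} > 1$ as above. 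The factor of $2$ in front and the deliberately loose rate $1/(2\sigma)$ are precisely the slack that lets the two regimes be stitched together without fuss.

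The main obstacle, such as it is, is purely bookkeeping: one must pick the exponent $\beta$ and the split point between the two regimes so that both the admissibility constraint $\beta \ge 1$ and the target constants ($2$ and $1/(2\sigma)$) are simultaneously met. The genuine analytic content is entirely contained in the moment bound of Lemma~\ref{moment-lem}; the remainder is the classical observation that factorial-type moment growth ($\E[|X-\E X|^\beta]^{1/\beta} \lesssim \beta\sigma$) is equivalent to sub-exponential concentration. I expect the writeup to be short, with the only care needed in confirming the elementary inequality $\Gamma(\beta+1) \le \beta^\beta$ and in handling small $t$ by the trivial probability bound.
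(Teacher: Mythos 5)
There is a genuine gap, and it is exactly at the point where your own writeup wavers. Writing $\sigma = 2\E[|X-\E[X]|]+1$, your moment-optimization route with the crude bound $\Gamma(\beta+1)\le\beta^\beta$ can only ever produce the tail $e^{-t/(e\sigma)}$, whose decay rate $\tfrac{1}{e\sigma}$ is \emph{strictly slower} than the target rate $\tfrac{1}{2\sigma}$. Your final chain ``$e^{-t/(e\sigma)} \le e^{-t/(2\sigma)} \le 2e^{-t/(2\sigma)}$ because $1/(2\sigma) < 1/(e\sigma)$'' has the inequality backwards: since $2 < e$ we have $\tfrac{1}{2\sigma} > \tfrac{1}{e\sigma}$, hence $e^{-t/(e\sigma)} \ge e^{-t/(2\sigma)}$ (you correctly observed this two sentences earlier, then contradicted it). The discrepancy $e^{t(\frac{1}{2\sigma}-\frac{1}{e\sigma})}$ grows without bound in $t$, so no constant prefactor can absorb it: concretely, $e^{-t/(e\sigma)} \le 2e^{-t/(2\sigma)}$ holds only for $t \le \frac{2e\log 2}{e-2}\sigma \approx 5.25\,\sigma$ and fails beyond. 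Your small-$t$ regime is also broken: $2e^{-e/2} \approx 0.51 < 1$, so for $t$ slightly below $e\sigma$ the bound is \emph{not} trivially true, contrary to your claim. The approach could be repaired, but only by replacing $\beta^\beta$ with a genuinely sharper Stirling-type estimate such as $\Gamma(\beta+1) \le e^{1/12}\sqrt{2\pi\beta}\,(\beta/e)^\beta$, which recovers rate essentially $1/\sigma$ and leaves room to spare; the loss from $(\beta/e)^\beta$ to $\beta^\beta$ is precisely the factor $e^\beta$ that kills the argument.

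The paper avoids this issue entirely by never decoupling the moments from their factorial weights: it sums the moment bounds of Lemma~\ref{moment-lem} into the exponential moment,
$$ \E\bigl[e^{\lambda|X-\E[X]|}\bigr] \;=\; 1+\sum_{\beta\ge 1}\frac{\lambda^\beta}{\beta!}\,\E[|X-\E[X]|^\beta] \;\le\; \sum_{\beta\ge 0}(\lambda\sigma)^\beta \;=\; \frac{1}{1-\lambda\sigma}, $$
valid for $0<\lambda<1/\sigma$, where the $\Gamma(\beta+1)=\beta!$ from the moment bound cancels \emph{exactly} against the $1/\beta!$ of the Taylor series. Choosing $\lambda = \tfrac{1}{2\sigma}$ gives $\E[e^{\lambda|X-\E[X]|}]\le 2$, and Markov's inequality applied to $e^{\lambda|X-\E[X]|}$ yields precisely $2e^{-t/(2\sigma)}$ in one step, with no regime-splitting. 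That exact cancellation is the sharpness your $\beta^\beta$ bound throws away.
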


\begin{proof}
    The proof is a standard application of the moments bounds obtained in Lemma \ref{moment-lem} (see, e.g., \cite{V22}). For $\lambda > 0$,
    \begin{eqnarray*}
    \E[e^{\lambda |X - \E[X]|}] = 1 + \sum_{\beta \geq 1} \frac{\lambda^{\beta}}{\beta !} \E[|X - \E[X]|^{\beta}] & \leq & 1 + \sum_{\beta \geq 1} \frac{\lambda^{\beta}}{\beta !} \beta ! (2\E[|X - \E[X]|] + 1)^{\beta} \\ & = & \sum_{\beta \geq 0} \left[ \lambda (2\E[|X - \E[X]|] + 1) \right]^{\beta} \\ & = & \frac{1}{1-\lambda (2\E[|X - \E[X]|] + 1)},
    \end{eqnarray*}    
    where the last identity holds for all $0 < \lambda < \frac{1}{2\E[|X - \E[X]|] + 1}$. Choosing $\lambda = \frac{1}{2(2\E[|X - \E[X]|] + 1)}$ yields
    $$ \E[e^{\lambda |X - \E[X]|}] \leq 2. $$
    Therefore, by Markov's inequality,
    $$ \P(|X - \E[X]| \geq t) = \P(e^{\lambda |X - \E[X]|} \geq e^{\lambda t}) \leq \E[e^{\lambda |X - \E[X]|}] e^{-\lambda t} \leq 2e^{- \frac{t}{2(2\E[|X - \E[X]|] + 1)}}. $$
\end{proof}

The following lemma, which provides a bound on the variance and maximum of the probability mass function of log-concave distributions on $\Z$, was established in \cite{BMM} and \cite{Ara} (see, also, \cite{BC15}, \cite{JMNS}).

\begin{lemma}[\cite{BMM}, \cite{Ara}]\label{maximum-bound}

Let $X$ be a log-concave distribution on $\Z$ with probability mass function $p$, then
$$ \sqrt{1 + \Var(X)} \leq \frac{1}{\|p\|_{\infty}} \leq \sqrt{1+ 12 \Var(X)}. $$

\end{lemma}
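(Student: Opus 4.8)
The statement is a conjunction of two inequalities of rather different natures, so the plan is to treat the two directions separately. The upper bound $1/\|p\|_\infty \le \sqrt{1+12\Var(X)}$ is in fact a general anti-concentration fact that needs no log-concavity at all, whereas the lower bound $\sqrt{1+\Var(X)} \le 1/\|p\|_\infty$ genuinely relies on it (it fails, e.g., for $X$ uniform on $\{0,N\}$, where $\|p\|_\infty=1/2$ but $\Var(X)$ is arbitrarily large). Write $M=\|p\|_\infty$ throughout.

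\textbf{Upper bound.} The idea is to dequantize by adding independent uniform noise. Let $U$ be uniform on $[-1/2,1/2]$, independent of $X$, and set $\tilde X = X+U$. Its density is the piecewise-constant function equal to $p(k)$ on $[k-1/2,k+1/2)$, so its supremum is $\max_k p(k)=M$, while $\Var(\tilde X)=\Var(X)+\tfrac1{12}$. I would then invoke the elementary fact that any real random variable $W$ with density bounded by $M$ satisfies $\Var(W)\ge 1/(12M^2)$: writing $\mu=\E W$, every interval of length $2t$ carries mass at most $2Mt$, hence $\P(|W-\mu|\ge t)\ge 1-2Mt$ for $0\le t\le 1/(2M)$, and substituting into
$$ \Var(W)=\int_0^\infty 2t\,\P(|W-\mu|\ge t)\,dt \ge \int_0^{1/(2M)} 2t\,(1-2Mt)\,dt = \frac{1}{12M^2}. $$
Applying this to $\tilde X$ gives $\Var(X)+\tfrac1{12}\ge 1/(12M^2)$, which rearranges to $1/M^2\le 1+12\Var(X)$, the claimed bound (with the uniform distribution showing sharpness).

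\textbf{Lower bound.} For $M^2(1+\Var(X))\le 1$ I would exploit log-concavity through the monotonicity of the ratios $p(k+1)/p(k)$: letting $m$ be the mode, this yields the two-sided geometric domination $p(m+j)\le M r^{\,j}$ and $p(m-j)\le M\ell^{\,j}$ for $j\ge 0$, with $r,\ell\in[0,1]$. One clean route is to pass to a genuinely continuous log-concave comparison, namely the log-linear interpolation $g(x)=p(\lfloor x\rfloor)^{\,1-\{x\}}\,p(\lfloor x\rfloor+1)^{\{x\}}$, which is log-concave on $\R$, and to invoke the classical bound $\|h\|_\infty^2\Var(h)\le 1$ valid for every continuous log-concave density $h$ (sharp for the exponential; see \cite{MM} and the references therein). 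Since the logarithmic mean of $p(k)$ and $p(k+1)$ is at most their arithmetic mean, $\int g\le\sum_k \tfrac12(p(k)+p(k+1))=1$, so the normalization $\bar g$ satisfies $\|\bar g\|_\infty\ge M$ and the continuous bound delivers $\Var(\bar g)\le 1/M^2$.

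\textbf{Main obstacle.} The entire difficulty lies in recovering the \emph{sharp} additive constant $1$. The continuous comparison only yields $\Var\lesssim 1/M^2$ and discards the discreteness correction, while the naive estimate $\Var(X)\le\E[(X-m)^2]$ about the mode is useless here: for skewed cases (e.g. a geometric law with ratio $q>1/2$) it already exceeds $1/M^2-1$. Thus the gap between the mode and the mean must be tracked exactly, and matching the constant $1$ requires careful discrete bookkeeping of the geometric majorants against the normalization $\sum_k p(k)=1$. This is the delicate step, and it is precisely the computation carried out in \cite{BMM} and \cite{Ara}.
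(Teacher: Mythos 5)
A preliminary observation: the paper never proves Lemma~\ref{maximum-bound} at all — it is imported as a known result from \cite{BMM} and \cite{Ara} — so there is no internal proof to compare against, and the question is whether your argument stands on its own. (Incidentally, the paper only ever invokes the upper-bound half, $1/\|p\|_{\infty} \le \sqrt{1+12\Var(X)}$, in Remark~\ref{second-iso} and in the proof of Theorem~\ref{divergence}.) Your proof of that upper bound is correct and complete: the smoothed variable $\tilde X = X+U$ does have density with supremum exactly $\|p\|_{\infty}$ and variance $\Var(X)+\tfrac{1}{12}$, the layer-cake computation
$$ \Var(W) \ge \int_0^{1/(2M)} 2t\,(1-2Mt)\,dt = \frac{1}{12M^2} $$
is right, and combining the two gives $1/\|p\|_{\infty}^2 \le 1+12\Var(X)$. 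Your observation that this half needs no log-concavity at all is also correct and worth keeping.

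The lower bound $\sqrt{1+\Var(X)} \le 1/\|p\|_{\infty}$ — the only part where log-concavity enters, and the substantive content of the lemma — is not proved, and the gap is twofold. First, even granting the continuous inequality $\|h\|_{\infty}^2\Var(h)\le 1$ for log-concave densities, your chain terminates at $\Var(\bar g)\le 1/M^2$ for the normalized log-linear interpolation $\bar g$, and you never relate $\Var(\bar g)$ back to $\Var(X)$; unlike $X+U$, the interpolated measure is not a perturbation of $X$ with a computable variance shift, and the renormalization by $\int g \le 1$ redistributes mass in a way that admits no obvious comparison of variances. Second, as you yourself concede, this route structurally cannot recover the additive constant $1$: a bound of the form $\Var(X)\lesssim 1/\|p\|_{\infty}^2$ is strictly weaker than the statement — for instance, as $\|p\|_{\infty}\to 1$ the true bound forces $\Var(X)\to 0$, while $\Var(X)\le 1/\|p\|_{\infty}^2$ permits variance of order $1$. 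Writing that the delicate step "is precisely the computation carried out in \cite{BMM} and \cite{Ara}" is an appeal to the sources, not a proof; those references argue intrinsically in the discrete setting (geometric majorization from the mode, respectively a degree-of-freedom argument) rather than by reduction to the continuous inequality. As it stands, the proposal establishes exactly one of the two inequalities in the lemma.
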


The following lemma is standard in information theory and provides an upper bound on the entropy of an integer valued random variable (see \cite{Ma88}). Recall that the Shannon entropy of an integer valued random variable $X$ with p.m.f. $p$ is defined as
$$ H(X) = \E[-\log(p(X))] = - \sum_{k \in \Z} p(k) \log(p(k)). $$

\begin{lemma}[\cite{Ma88}]\label{bound-ent}

For any integer valued random variable $X$ with finite second moment,
$$ H(X) \leq \frac{1}{2} \log \left( 2 \pi e \left( \Var(X) + \frac{1}{12} \right) \right). $$

\end{lemma}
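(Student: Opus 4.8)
The plan is to use a \emph{dithering} argument that converts the discrete entropy of $X$ into a differential entropy, to which the Gaussian maximum-entropy principle applies. First I would introduce an auxiliary random variable $U$ distributed uniformly on $[-1/2, 1/2]$, independent of $X$, and set $Y = X + U$. Since $X$ is integer valued and $U$ has density $1$ on an interval of length one, the density $f_Y$ of $Y$ is piecewise constant: on each interval $[k - 1/2, k + 1/2)$ it equals $p(k)$, because only the term $X = k$ contributes to the convolution there.

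The key step is to verify that the differential entropy of $Y$ coincides exactly with the discrete entropy of $X$. Indeed, integrating $-f_Y \log f_Y$ over the interval $[k-1/2, k+1/2)$ (of length one) yields $-p(k)\log p(k)$, and summing over $k$ gives
$$ h(Y) = -\int_{\R} f_Y(y) \log f_Y(y) \, \de y = -\sum_{k \in \Z} p(k) \log p(k) = H(X). $$
This is the point where the specific choice of $U$---uniform on an interval of length exactly one, aligned with the integer lattice---is essential; any other smoothing would fail to produce this clean identity. I regard this identity as the conceptual heart of the argument, though it presents no genuine obstacle.

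Next I would compute the variance of $Y$. By independence, $\Var(Y) = \Var(X) + \Var(U)$, and since $U$ is uniform on an interval of length one, $\Var(U) = 1/12$, which accounts for the $1/12$ appearing in the statement. Finally, invoking the classical fact that among all real-valued random variables with a prescribed variance the Gaussian maximizes differential entropy, namely $h(Y) \leq \tfrac{1}{2} \log(2\pi e \, \Var(Y))$, and combining with the two preceding observations gives
$$ H(X) = h(Y) \leq \frac{1}{2} \log\!\left( 2\pi e \left( \Var(X) + \frac{1}{12} \right) \right), $$
as desired. The only point requiring minor care is confirming that $Y$ has finite differential entropy and finite variance so that the maximum-entropy bound is applicable, and this follows from the hypothesis that $X$ has finite second moment.
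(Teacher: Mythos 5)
Your proposal is correct, and it is precisely the classical dithering argument of Massey \cite{Ma88}, which the paper cites for this lemma without reproducing a proof: add an independent uniform $U$ on $[-1/2,1/2]$, verify $h(X+U)=H(X)$ and $\Var(X+U)=\Var(X)+\tfrac{1}{12}$, then apply the Gaussian maximum-entropy bound. There is nothing to correct.
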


The last lemma of this section provides a bound on the second moment of the information content of a log-concave distribution on $\Z$.

\begin{lemma}\label{second-bound}

Let $X$ be a discrete log-concave random variable on $\Z$ with probability mass function $p$. Then,
$$ \E[\log^2(p(X))] \leq 4 \left( 4 e^{-2} + 1 + \frac{H^2(X)}{\|p\|_{\infty}} \right). $$

\end{lemma}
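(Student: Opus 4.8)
The plan is to reduce the statement to an analysis of the scalar function $\phi(x) = x\log^2 x$ on $(0,1]$ and to separate the ``location'' of the information content from its ``spread''. Writing $\iota(k) = -\log p(k)\ge 0$, we have $\E[\log^2(p(X))] = \sum_{k} \phi(p(k))$. Two elementary calculus facts drive the constants: $\phi$ attains its maximum $4e^{-2}$ at $x = e^{-2}$, and the related function $\sqrt{x}\,\log^2 x$ attains its maximum $16e^{-2}=4\cdot 4e^{-2}$ at $x = e^{-4}$. Introduce the mode value $\|p\|_\infty$ and set $\iota_{\min} = -\log\|p\|_\infty$ and $J(k) = \log(\|p\|_\infty/p(k)) = \iota(k)-\iota_{\min}\ge 0$. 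Since $\iota(k)\ge\iota_{\min}$ for all $k$, we have $H(X) = \E[\iota(X)]\ge\iota_{\min}$, and the pointwise inequality $\iota^2\le 2\iota_{\min}^2 + 2J^2$ yields
\begin{equation*}
\E[\log^2(p(X))] \le 2\iota_{\min}^2 + 2\,\E[J(X)^2] \le 2H^2(X) + 2\,\E[J(X)^2].
\end{equation*}
The first term is already of the desired form (up to the harmless factor $\|p\|_\infty^{-1}\ge 1$), so the crux is to bound the excess information $\E[J(X)^2] = \sum_k p(k)\log^2(\|p\|_\infty/p(k))$ by a constant of size $8e^{-2}+2$.

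First I would split the support according to whether $p(k)$ is comparable to $\|p\|_\infty$. On the ``heavy'' points, where $p(k)$ is within a fixed factor of $\|p\|_\infty$, the quantity $J(k)$ is bounded, so $\log^2(\|p\|_\infty/p(k))$ is bounded by an absolute constant; since these points carry total mass at most $1$, their contribution is at most a constant (this is the source of the $+1$). On the ``light'' points I would use the factorization $p(k)\log^2(\|p\|_\infty/p(k)) = \sqrt{p(k)}\cdot\big(\sqrt{p(k)}\,\log^2(\|p\|_\infty/p(k))\big)$ together with $\sqrt{x}\log^2 x\le 16e^{-2}$, reducing the light contribution to a constant multiple of $\sum_k\sqrt{p(k)}$. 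To control $\sum_k\sqrt{p(k)}=\sqrt{\|p\|_\infty}\sum_k e^{-J(k)/2}$ I would use log-concavity: $J$ is convex, nonnegative and vanishes at the mode, so the sequence $p(k)$ decays at least geometrically away from the mode and $\sum_k\sqrt{p(k)}$ is finite and controlled in terms of $\|p\|_\infty$ via Lemma~\ref{maximum-bound} (equivalently, in terms of $\Var(X)$).

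The main obstacle is precisely this last step: extracting from $\sum_k\sqrt{p(k)}$ (or, equivalently, from counting the effective number of support points) a clean numerical bound, and trading it against $\|p\|_\infty$ and $H(X)$ to land on the stated constants $4(4e^{-2}+1+H^2(X)/\|p\|_\infty)$. I expect the factor $\|p\|_\infty^{-1}$ in the final bound to originate here, from a deliberately crude counting/Cauchy--Schwarz estimate for the light tail (Lemma~\ref{maximum-bound} converts the spread of $p$ into the reciprocal of its maximum); a sharper treatment of the geometric tail would presumably remove this factor and leave only a universal constant plus a multiple of $H^2(X)$. Assembling the heavy and light contributions with the location term $2H^2(X)$ then gives the claimed inequality.
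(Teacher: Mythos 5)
Your setup is sound (the splitting $-\log p = \iota_{\min} + J$ with $\iota_{\min} = -\log\|p\|_{\infty} \le H(X)$, and the calculus facts about $x\log^2 x$), but there is a genuine gap exactly where you flag "the main obstacle": the bound on the spread term $\E[J(X)^2]$ is never actually proved, and the route you propose cannot deliver the stated constants. Concretely, your light-point estimate reduces matters to $\sum_k \sqrt{p(k)}$, which for discrete log-concave laws is \emph{not} a universal constant; it is of order $1/\sqrt{\|p\|_{\infty}}$ (e.g., the uniform law on $\{0,\dots,n\}$ gives $\sum_k\sqrt{p(k)} = \sqrt{n+1}$, and truncated geometric laws behave similarly). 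If you keep careful track of the normalization (note that $\sqrt{x}\log^2 x \le 16e^{-2}$ must be applied to $x = p(k)/\|p\|_{\infty}$, not to $p(k)$, which reinstates a factor $\sqrt{\|p\|_{\infty}}$), the best this scheme can yield is a bound of the form $\E[\log^2(p(X))] \le 2H^2(X) + C'$ with a large universal $C'$ (of order $10^2$ after the geometric-tail counting). Such a bound does not imply the lemma: in the regime $H(X)\to 0$, $\|p\|_{\infty}\to 1$ (near-Dirac laws), the lemma's right-hand side tends to $16e^{-2}+4 \approx 6.2$, which is far smaller than any such $C'$. So even a fully executed version of your plan proves a different, weaker statement, not the one claimed.

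The paper's proof supplies precisely the ingredient your sketch is missing: a variance bound on the information content. It splits $p$ at its mode $m$ into the normalized monotone pieces $p^{\nearrow}$ (on $k\le m$) and $p^{\searrow}$ (on $k\ge m$), with masses $a,b \in [\|p\|_{\infty},1]$, and invokes the result of Melbourne and Palafox-Castillo \cite{MP-C} that $\Var(\log(p^{\nearrow}(X^{\nearrow}))) \le 1$ and $\Var(\log(p^{\searrow}(X^{\searrow}))) \le 1$ for monotone log-concave p.m.f.s. Then $\E[\log^2 (p^{\nearrow})] = \Var + H^2(X^{\nearrow})$ with $H(X^{\nearrow}) \le H(X)/a$, and reassembling the two halves (using $x\log^2 x \le 4e^{-2}$ for the $a\log^2 a$, $b\log^2 b$ terms and $a,b \ge \|p\|_{\infty}$) produces exactly the constants $4\left(4e^{-2}+1+H^2(X)/\|p\|_{\infty}\right)$. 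In your language, this controls $\Var(\log p(X))$ by an absolute constant, whereas your geometric-decay/counting argument controls it only up to quantities like $\sum_k\sqrt{p(k)}$ that degrade with the distribution. To complete your proof you would need either to import that variance bound (at which point your argument essentially becomes the paper's) or to prove a comparable variance inequality from scratch, which is a substantial result in its own right.
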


\begin{proof}
Let $X$ be a log-concave random variable with p.m.f. $p$. Then $p$ is unimodal, that is, there exists $m \in \Z$ such that for all $k \leq m$, $p(k-1) \leq p(k)$ and for all $k \geq m$, $p(k) \geq p(k+1)$. Note that $p(m) = \|p\|_{\infty}$. Define, for $k \in \Z$,
$$ p^{\nearrow}(k) = \frac{p(k)}{\sum_{l \leq m} p(l)} 1_{\{k \leq m\}}, $$
and
$$ p^{\searrow}(k) = \frac{p(k)}{\sum_{l \geq m} p(l)} 1_{\{k \geq m\}}. $$
Note that both $p^{\nearrow}$ and $p^{\searrow}$ are monotone log-concave probability mass functions. Denote by $X^{\nearrow}$ (resp. $X^{\searrow}$) a random variable with p.m.f. $p^{\nearrow}$ (resp. $p^{\searrow}$). Denote also $a = \sum_{l \leq m} p(l)$ and $b=\sum_{l \geq m} p(l)$. On one hand, by a result of Melbourne and Palafox-Castillo \cite[Theorem 2.5]{MP-C},
$$ \Var(\log(p^{\nearrow}(X^{\nearrow}))) \leq 1, \qquad \Var(\log(p^{\searrow}(X^{\searrow}))) \leq 1. $$
On the other hand,
$$ H(X^{\nearrow}) = \sum_{k \leq m} \frac{p(k)}{a} \log \left( \frac{a}{p(k)} \right) \leq \frac{1}{a} \sum_{k \leq m} p(k) \log \left( \frac{1}{p(k)} \right) \leq \frac{1}{a} H(X), $$
and similarly,
$$ H(X^{\searrow}) \leq \frac{H(X)}{b}. $$
Therefore,
$$ \E[\log^2(p^{\nearrow}(X^{\nearrow}))] = \Var(\log(p^{\nearrow}(X^{\nearrow}))) + H^2(X^{\nearrow}) \leq 1 + \frac{H^2(X)}{a^2}, $$
and similarly, 
$$ \E[\log^2(p^{\searrow}(X^{\searrow}))] \leq 1 + \frac{H^2(X)}{b^2}. $$
We deduce that
\begin{eqnarray*}
\E[\log^2(p(X))] & = & \sum_{k \in \Z} p(k) \log^2(p(k)) \\ & \leq & \sum_{k \in \Z} ap^{\nearrow}(k) \log^2(ap^{\nearrow}(k)) +  \sum_{k \in \Z} bp^{\searrow}(k) \log^2(bp^{\searrow}(k)) \\ & \leq & 2 \left( a \log^2(a) + a\E[\log^2(p^{\nearrow}(X^{\nearrow}))] + b \log^2(b) + b\E[\log^2(p^{\searrow}(X^{\searrow}))] \right) \\ & \leq & 2 \left(4 e^{-2} + a + \frac{H^2(X)}{a} + 4 e^{-2} + b + \frac{H^2(X)}{b} \right) \\ & \leq & 4 \left( 4 e^{-2} + 1 + \frac{H^2(X)}{\|p\|_{\infty}} \right),
\end{eqnarray*}
where we used the fact that $a,b \in [\|p\|_{\infty}, 1]$. 
\end{proof}

\begin{remark}\label{second-iso}

For a log-concave random variable $X$ on $\Z$ with probability mass function $p$ and variance bounded by 1, the above bounds imply
\begin{eqnarray}\label{moment-iso}
\E[|X - \E[X]|^{\beta}]^{\frac{1}{\beta}} & \leq & 3 \Gamma(\beta + 1)^{\frac{1}{\beta}}, \quad \beta \geq 1, \\ \label{concentration-iso} \P(|X - \E[X]| \geq t) & \leq & 2 e^{- \frac{t}{6}}, \quad t \geq 0, \\ \label{infinity-iso} \frac{1}{\|p\|_{\infty}} & \leq & \sqrt{13}, \\ \label{entropy-iso} H(X) & \leq & \frac{1}{2} \log \left( 2 \pi e \left( 1 + \frac{1}{12} \right) \right) \leq \frac{3}{2},
\end{eqnarray}
in particular, we also deduce
\begin{equation}\label{varent-iso}
\E[\log^2(p(X))] \leq 4(4e^{-2} + 1 + \frac{9}{4} \sqrt{13}) \leq 39.
\end{equation}

\end{remark}

\section{Main results and proofs}\label{3}

This section contains our main results together with the proofs. The first theorem establishes quantitative reversal bounds between 1-Wasserstein distance and L\'evy-Prokhorov distance.

\begin{theorem}\label{w1}
    Let $\mu$ and $\nu$ be centered log-concave probability measures on $\Z$, then
     $$ W_1(\mu, \nu) \leq 4 d_{LP}(\mu, \nu) (2\max\{\E[|X|],\E[|Y|]\} + 1) \log \left(\frac{4e(\E[|X|]+\E[|Y|]+1)}{(2\max\{\E[|X|],\E[|Y|]\} + 1) d_{LP}(\mu, \nu)} \right), $$
     where $X$ (resp. $Y$) denotes a random variable with distribution $\mu$ (resp. $\nu$).
\end{theorem}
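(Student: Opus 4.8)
The plan is to work with the coupling representations of both distances and to estimate a single, well-chosen coupling. Write $\delta = d_{LP}(\mu,\nu)$ and $M = \max\{\E[|X|],\E[|Y|]\}$. By the coupling representation \eqref{LV-coupl} of the L\'evy-Prokhorov distance through the Ky-Fan distance, for every $\eta > 0$ there is a coupling $(X,Y)$ of $\mu$ and $\nu$ with $K(X,Y) < \delta + \eta$; by the definition of the Ky-Fan distance and monotonicity of $t \mapsto \P(|X-Y|>t)$, this yields $\P(|X-Y|>t) \leq \delta + \eta$ for every $t > \delta + \eta$. Since $W_1(\mu,\nu) \leq \E[|X-Y|]$ for this coupling, I would start from the layer-cake identity
$$ \E[|X-Y|] = \int_0^\infty \P(|X-Y|>t)\,\de t $$
and split the integral at a threshold $T$ to be optimized, letting $\eta \to 0$ at the end.

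On the \emph{body} $0 \le t \le T$ I would use the Ky-Fan bound $\P(|X-Y|>t) \le \delta$ valid for $t > \delta$ (together with the trivial bound $\P \le 1$ on the short interval $[0,\delta]$), which contributes a term of order $\delta(1+T)$. On the \emph{tail} $t > T$ I would discard the coupling information and instead exploit that both marginals are centered and log-concave: by the triangle inequality $\P(|X-Y|>t) \le \P(|X|>t/2) + \P(|Y|>t/2)$, and the concentration inequality of Lemma \ref{concentration-lem} applied to the centered variables $X$ and $Y$ (bounding both individual scales $2\E[|X|]+1$ and $2\E[|Y|]+1$ by $2M+1$) gives the sub-exponential estimate
$$ \P(|X-Y|>t) \le 4\,e^{-\frac{t}{4(2M+1)}}, \qquad t \ge 0. $$
Integrating this over $[T,\infty)$ produces a tail contribution of order $(2M+1)\,e^{-T/(4(2M+1))}$, so that altogether
$$ \E[|X-Y|] \lesssim \delta(1+T) + (2M+1)\,e^{-\frac{T}{4(2M+1)}}. $$

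It remains to optimize the free parameter $T$. Balancing the two terms forces the choice $T \asymp 4(2M+1)\log(1/\delta)$, which reproduces the prefactor $4\delta(2M+1)$ and the $\log(1/\delta)$ growth of the stated bound; the logarithmic scale inside, together with the numerator $4e(\E[|X|]+\E[|Y|]+1)$, comes from the concentration scale $2M+1$ and from absorbing the residual linear-in-$\delta$ terms into the leading logarithm, using that $\E[|X|]+\E[|Y|]+1 \ge \tfrac{1}{2}(2M+1)$ keeps the argument of the logarithm bounded below by a universal constant exceeding $1$ (so the bound stays meaningful and the lower-order terms are dominated). The main technical obstacle is exactly this bookkeeping: the naive optimization leaves additive terms of order $(2M+1)\delta$ that must be folded back into the logarithmic term without degrading the constant $4$, which requires comparing $M$ with $\E[|X|]+\E[|Y|]$ and verifying that the argument of the logarithm never enters the regime where the estimate becomes vacuous. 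A secondary point to handle carefully is the passage from the near-optimal Ky-Fan coupling (with its $\eta$-slack and the subtle behavior of $\P(|X-Y|>t)$ near $t = \delta$) to the clean uniform body bound $\P(|X-Y|>t)\le\delta$.
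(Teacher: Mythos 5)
Your proposal follows the same skeleton as the paper's proof: a layer-cake decomposition of $\E[|X-Y|]$ split at a threshold, the Ky-Fan/L\'evy-Prokhorov coupling bound on the body, Lemma \ref{concentration-lem} on the tail, and an optimization of the threshold. But there is one concrete gap, and it sits exactly at the step you describe as ``bookkeeping.'' Your body estimate uses the trivial bound $\P(|X-Y|>t)\leq 1$ on $[0,\delta]$ and the Ky-Fan bound only for $t>\delta$, which costs an additive $\delta$. The paper never pays this cost: since $\mu$ and $\nu$ live on $\Z$, any coupling has $X-Y$ integer valued, so for \emph{all} $t\geq 0$,
$$ \P(|X-Y|>t) \;=\; \P\bigl(|X-Y|\geq \lfloor t\rfloor +1\bigr) \;\leq\; \P(|X-Y|\geq 1) \;\leq\; K(X,Y), $$
i.e.\ the Ky-Fan bound holds uniformly on the whole body, with no trivial-bound interval at all. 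This integer-valuedness observation is the one idea missing from your argument.

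The loss is not absorbable by re-optimizing, so the gap is genuine rather than cosmetic. Writing $\delta=d_{LP}(\mu,\nu)$, $M=\max\{\E[|X|],\E[|Y|]\}$, $S=\E[|X|]+\E[|Y|]+1$, your route (keeping, as you must, the individual scales in the tail so that the prefactor is $16S$ rather than $16(2M+1)$) gives, after $\eta\to 0$,
$$ W_1(\mu,\nu) \;\leq\; \delta + \delta T + 16\,S\,e^{-\frac{T}{4(2M+1)}}, $$
while the paper obtains the same expression \emph{without} the leading $\delta$. Parametrizing $T = 4(2M+1)\log\bigl(\tfrac{4Ss}{(2M+1)\delta}\bigr)$, your bound equals
$$ 4\delta(2M+1)\left[ \log\left(\frac{4S}{(2M+1)\delta}\right) + \log s + \frac{1}{s} + \frac{1}{4(2M+1)} \right], $$
whereas the theorem's right-hand side is $4\delta(2M+1)\bigl[\log\bigl(\tfrac{4S}{(2M+1)\delta}\bigr)+1\bigr]$; the paper's choice $s=1$ attains it with equality, i.e.\ with zero slack. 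Since $\log s + \tfrac1s \geq 1$ for every $s>0$, no choice of threshold can absorb your extra $\tfrac{1}{4(2M+1)}$, so what you actually prove is $W_1 \leq (\text{stated bound}) + d_{LP}(\mu,\nu)$, strictly weaker whenever $\mu\neq\nu$. Note also that your inequality $S\geq\tfrac12(2M+1)$ points the wrong way for this purpose: it lower-bounds the theorem's logarithm, so it cannot be used to dominate an extra additive term. The repair is one line — replace the trivial bound on $[0,\delta]$ by the displayed integer-valuedness bound — after which your computation reproduces the paper's proof and the stated constant $4e$ exactly.
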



\begin{proof}    
    Let $R > 0$. Let $X$ (resp. $Y$) be distributed according to $\mu$ (resp. $\nu$). Note that for all $t \geq 0$,
    $$ \P(|X-Y| > t) = \P(|X-Y| \geq \lfloor t \rfloor + 1) \leq \P(|X-Y| \geq 1) \leq K(X,Y), $$
    therefore,
    \begin{eqnarray*}
    \E[|X-Y|] & = & \int_0^{R} \P(|X-Y| > t) dt + \int_R^{\infty} \P(|X-Y| > t) dt \\ & \leq & R K(X,Y) + \int_R^{\infty} \P \left(|X| > \frac{t}{2} \right) dt + \int_R^{\infty} \P \left(|Y| > \frac{t}{2} \right) dt.
    \end{eqnarray*}
    Applying Lemma \ref{concentration-lem}, we obtain
    \begin{eqnarray*}
    W_1(\mu, \nu) \leq \E[|X-Y|] & \leq & R K(X,Y) + 2 \int_R^{\infty} e^{- \frac{t}{4(2\E[|X|] + 1)}} dt + 2 \int_R^{\infty} e^{- \frac{t}{4(2\E[|Y|] + 1)}} dt \\ & \leq & R K(X,Y) + 16(\E[|X|] + \E[|Y|] + 1) e^{- \frac{R}{4(2\max\{\E[|X|],\E[|Y|]\} + 1)}}.
    \end{eqnarray*}
    The above inequality being true for any random variable $X$ with distribution $\mu$ and any random variable $Y$ with distribution $\nu$, we deduce by taking infimum over all couplings that
    $$ W_1(\mu, \nu) \leq R d_{LP}(\mu, \nu) + 16(\E[|X|] + \E[|Y|] + 1) e^{- \frac{R}{4(2\max\{\E[|X|],\E[|Y|]\} + 1)}}. $$
    Choosing 
    $$ R = 4(2\max\{\E[|X|],\E[|Y|]\} + 1) \log \left(\frac{4(\E[|X|]+\E[|Y|]+1)}{(2\max\{\E[|X|],\E[|Y|]\} + 1) d_{LP}(\mu, \nu)} \right), $$
    which is nonnegative, yields the desired result.
\end{proof}

\begin{remark}

One may obtain a result for non-centered log-concave probability measures by considering the following bound,
$$ \P(|X| \geq t) \leq \P(|X-\E[X]| \geq t - |\E[X]|) \leq 2e^{- \frac{t}{2(2\E[|X - \E[X]|] + 1)}} e^{\frac{|\E[X]|}{2(2\E[|X - \E[X]|] + 1)}}, $$
valid for all log-concave random variable $X$ and $t \geq 0$, where we used Lemma \ref{concentration-lem} in the last inequality. An extra term involving $|\E[X]|$ thus appears in the non-centered case. The details are left to the reader.

\end{remark}

\begin{remark}

Theorem \ref{w1} yields the following universal bound for centered log-concave probability measures $\mu, \nu$ on $\Z$ with first absolute moment bounded by 1:
\begin{equation}\label{universal}
W_1(\mu, \nu) \leq 12 d_{LP}(\mu, \nu) \log \left( \frac{4 e}{ d_{LP}(\mu, \nu)} \right).
\end{equation}

Note that the universal bound \eqref{universal} does not hold in general without a condition on the first absolute moment, as can be seen by taking $\mu$ to be a point mass at 0 and $\nu$ a point mass at $m \in \N$ and letting $m \to +\infty$. In this case, the Wasserstein distance tends to $+\infty$ while the right-hand side of \eqref{universal} remains bounded since $d_{LP}(\mu, \nu)$ is always less than 1.

\end{remark}

\begin{remark}

In terms of statistical distances, the rate of convergence in Theorem \ref{w1} is optimal up to a log factor, as one always has $d_{LP}(\mu, \nu) \leq W_1(\mu, \nu)$. Nonetheless, it is still open whether one can remove the term $\log(1/d_{LP})$ in Theorem \ref{w1}. This remains open as well in the continuous setting.
    
\end{remark}

The next theorem demonstrates that Wasserstein distances are comparable for discrete log-concave distributions.

\begin{theorem}\label{wp}

Let $\mu$ and $\nu$ be centered log-concave probability measures on $\mathbb{Z}$, then for all $1 \leq p \leq q$,
\begin{multline*}
W_q^q(\mu, \nu) \leq 2 W_p^p(\mu, \nu) + W_p^p(\mu, \nu) \log^{q-p} \left( \frac{2^q(\E[|X|] + \E[|Y|] + 1)^q \sqrt{\Gamma(2q + 1)}}{W_p^p(\mu, \nu)} \right) \, \times \\ 8^{q-p} (2\max\{\E[|X|],\E[|Y|]\} + 1)^{q-p},
\end{multline*}
where $X$ (resp. $Y$) denotes a random variable with distribution $\mu$ (resp. $\nu$).

\end{theorem}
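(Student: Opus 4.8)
The plan is to mimic the interpolation argument behind Theorem \ref{w1}. Fix a coupling $(X,Y)$ of $\mu$ and $\nu$, introduce a cutoff radius $R > 0$, and split
$$ \E[|X-Y|^q] = \E[|X-Y|^q 1_{\{|X-Y| \leq R\}}] + \E[|X-Y|^q 1_{\{|X-Y| > R\}}]. $$
On the first event one has $|X-Y|^{q-p} \leq R^{q-p}$, so the bounded part is dominated by $R^{q-p}\E[|X-Y|^p]$; this is the term that will eventually produce $W_p^p$. For the tail part I would apply the Cauchy--Schwarz inequality to separate a moment factor from a probability factor,
$$ \E[|X-Y|^q 1_{\{|X-Y| > R\}}] \leq \E[|X-Y|^{2q}]^{\frac{1}{2}} \, \P(|X-Y| > R)^{\frac{1}{2}}. $$

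Next I would estimate both tail factors using only the marginals. For the moment factor, Minkowski's inequality gives $\E[|X-Y|^{2q}]^{\frac{1}{2q}} \leq \E[|X|^{2q}]^{\frac{1}{2q}} + \E[|Y|^{2q}]^{\frac{1}{2q}}$, and since $\mu,\nu$ are centered I can invoke Lemma \ref{moment-lem} with $\beta = 2q$ to obtain $\E[|X-Y|^{2q}]^{\frac{1}{2}} \leq 2^q(\E[|X|]+\E[|Y|]+1)^q\sqrt{\Gamma(2q+1)}$, which is exactly the quantity appearing inside the logarithm of the statement. For the probability factor, the triangle inequality yields $\P(|X-Y| > R) \leq \P(|X| > R/2) + \P(|Y| > R/2)$, and Lemma \ref{concentration-lem} bounds each summand by a sub-exponential tail, giving $\P(|X-Y| > R)^{\frac{1}{2}} \leq 2\, e^{-R/(8(2\max\{\E[|X|],\E[|Y|]\}+1))}$.

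The crucial observation is that both tail factors depend only on the marginal laws, not on the chosen coupling. Writing $D = 2^q(\E[|X|]+\E[|Y|]+1)^q\sqrt{\Gamma(2q+1)}$ and $M = 2\max\{\E[|X|],\E[|Y|]\}+1$, the previous estimates combine, for an arbitrary coupling, into
$$ \E[|X-Y|^q] \leq R^{q-p}\,\E[|X-Y|^p] + 2D\, e^{-R/(8M)}. $$
Specializing to a coupling that is optimal for $W_p$, for which $\E[|X-Y|^p] = W_p^p(\mu,\nu)$, and bounding the left-hand side below by the infimum $W_q^q(\mu,\nu)$, I arrive at $W_q^q(\mu,\nu) \leq R^{q-p} W_p^p(\mu,\nu) + 2D\, e^{-R/(8M)}$.

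Finally I would optimize over $R$. The choice $R = 8M\log(D/W_p^p(\mu,\nu))$ makes the tail term equal to $2W_p^p(\mu,\nu)$ and turns $R^{q-p}$ into $8^{q-p}M^{q-p}\log^{q-p}(D/W_p^p(\mu,\nu))$, which reassembles precisely into the claimed inequality. The main points to check are that this $R$ is nonnegative, i.e. $D \geq W_p^p(\mu,\nu)$ (which follows from $D \geq 1$ together with $W_p^p \leq \E[|X-Y|^{2q}]^{p/(2q)} \leq D^{p/q} \leq D$ via Lyapunov's inequality), and that substituting a $W_p$-optimal coupling does not disturb the marginal-only tail bound. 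This last point, ensuring coupling-independence of the tail so that the interpolation closes cleanly, is the step I expect to require the most care.
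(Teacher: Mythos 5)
Your proposal is correct and follows essentially the same route as the paper's proof: the same truncation at radius $R$, Cauchy--Schwarz on the tail, Lemmas \ref{moment-lem} and \ref{concentration-lem} applied to the marginals, the same choice $R = 8M\log(D/W_p^p)$, and the same observation that the tail bounds are coupling-independent (the paper phrases this as taking the infimum over couplings rather than invoking a $W_p$-optimal coupling, but the effect is identical). Your verification that $R \geq 0$ via Lyapunov's inequality and $D \geq 1$ is a valid, slightly different route from the paper's use of the moment bound plus log-convexity of the Gamma function.
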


\begin{proof}
    Let $X$ (resp. $Y$) be distributed according to $\mu$ (resp. $\nu$). Let $R > 0$. One has
    \begin{eqnarray*}
    \E[|X-Y|^q] & = & \E[|X-Y|^{q-p+p} 1_{\{ |X-Y| < R \}}] + \E[|X-Y|^q 1_{\{ |X-Y| \geq R\}}] \\ & \leq & R^{q-p} \E[|X-Y|^{p}] + \sqrt{\P(|X-Y| \geq R) \E[|X-Y|^{2q}] },
    \end{eqnarray*}
    where we used the Cauchy-Schwarz inequality.
    Note that by Lemma \ref{moment-lem},
    $$ \E[|X-Y|^{2q}]^{\frac{1}{2q}} \leq \E[|X|^{2q}]^{\frac{1}{2q}} + \E[|Y|^{2q}]^{\frac{1}{2q}} \leq  2(\E[|X|] + \E[|Y|] + 1) \Gamma(2q + 1)^{\frac{1}{2q}}. $$
    Moreover, by Lemma \ref{concentration-lem},
    \begin{eqnarray*} \P(|X-Y| \geq R) \leq \P \left( |X| \geq \frac{R}{2} \right) + \P \left( |Y| \geq \frac{R}{2} \right) & \leq & 2 e^{- \frac{R}{4(2\E[|X|] + 1)}} + 2 e^{- \frac{R}{4(2\E[|Y|] + 1)}} \\ & \leq & 4 e^{- \frac{R}{4(2\max\{\E[|X|],\E[|Y|]\} + 1)}}.
    \end{eqnarray*}
    Combining the above and taking infimum over all couplings yield
    $$ W_q^q(\mu, \nu) \leq R^{q-p} W_p^p(\mu, \nu) + 2^{q}(\E[|X|] + \E[|Y|] + 1)^{q} \sqrt{\Gamma(2q + 1)} 2 e^{- \frac{R}{8(2\max\{\E[|X|],\E[|Y|]\} + 1)}}. $$
    The result follows by choosing 
    $$ R = 8 (2\max\{\E[|X|],\E[|Y|]\} + 1) \log \left( \frac{2^q(\E[|X|] + \E[|Y|] + 1)^q \sqrt{\Gamma(2q + 1)}}{W_p^p(\mu, \nu)} \right), $$
    which is nonnegative since by Lemma \ref{moment-lem} and log-convexity of the Gamma function,
    \begin{eqnarray*}
    W_p^p(\mu,\nu) \leq \left( \E[|X|^p]^{\frac{1}{p}} + \E[|Y|^p]^{\frac{1}{p}} \right)^p & \leq & 2^p(\E[|X|] + \E[|Y|] + 1)^p \Gamma(p+1) \\ & \leq & 2^q(\E[|X|] + \E[|Y|] + 1)^q \sqrt{\Gamma(2q+1)}.
    \end{eqnarray*}
\end{proof}

\begin{remark}

Theorem \ref{wp} yields the following universal bound for centered log-concave probability measures $\mu, \nu$ on $\Z$ with first absolute moment bounded by 1:
\begin{equation*}
W_q^q(\mu, \nu) \leq 24^{q-p} W_p^p(\mu, \nu) \log^{q-p} \left( \frac{6^{q} \sqrt{\Gamma(2q + 1)}}{W_p^p(\mu, \nu)} \right) + 2 W_p^p(\mu, \nu).
\end{equation*}

\end{remark}

\begin{remark}

In terms of statistical distances, Theorem \ref{wp} yields a rate of convergence of the form
$$ W_p(\mu, \nu) \leq W_q(\mu, \nu) \leq C \, W_p^{\frac{p}{q}}(\mu, \nu) \log^{1-\frac{p}{q}}(1/W_p(\mu, \nu)), $$
for some constant $C$ depending on $p$, $q$, and the first absolute moment of $\mu$ and $\nu$. It is still open whether one can remove the term $\log^{1-\frac{p}{q}}(1/W_p)$ and improve the power $p/q$ in the term $W_p^{p/q}$ in Theorem \ref{wp}. This remains open as well in the continuous setting.

\end{remark}

Let us now turn to $f$-divergences. Considering $f$-divergences, such as the Kullback-Leibler divergence, the main question lies in figuring out the distribution of the reference measure. In general, if the support of a measure $\mu$ is not included in the support of a measure $\nu$, then $D(\mu || \nu) = + \infty$. It turns out that our choice of reference measure needs not be log-concave. Given $a > 0$ and $c \geq 1$, let us introduce the following class of functions:
$$ \mathcal{Q}(a,c) = \{q \colon \Z \to [0,1] : \forall k \in \supp(q), \, \log \left( \frac{1}{q(k)} \right) \leq ak^2 + \log(c) \}, $$
where $\supp(q) = \{k \in \Z : q(k) > 0\}$.

Before stating our next result, let us note that important distributions belong to such a class.

\begin{remark}

The symmetric Poisson distribution with variance 1, whose probability mass function is
$$ q(k) = C \frac{\lambda^{|k|}}{|k|!}, \quad k \in \Z, $$
with $\lambda > 0$ such that $\sum_{k \in \Z} k^2 q(k) = 1$ and $C = (2 e^{\lambda} - 1)^{-1}$ being the normalizing constant, belongs to $\mathcal{Q}(1+\log(4), 2e -1)$. Indeed, since
$$ 1 = \sum_{k \in \Z} k^2 q(k) = \frac{2e^{\lambda}}{2 e^{\lambda} - 1}\lambda(1 + \lambda), $$
then one may choose $\lambda \in [1/4,1]$. Therefore, using $|k|! \leq |k|^{|k|}$,
$$ 0 \leq \log \left( \frac{1}{q(k)} \right) = \log(|k|!) + |k| \log \left( \frac{1}{\lambda} \right) + \log(2 e^{\lambda} - 1) \leq (1+\log(4)) k^2 + \log(2e - 1). $$

One may also note that the symmetric geometric distribution with variance 1 and the discretized Gaussian distribution with variance 1 (whose p.m.f. is of the form $q(k) = Ce^{-\lambda k^2}$) belong to $\mathcal{Q}(a,c)$ for some numerical constants $a,c>0$. The above three measures are natural candidates as a reference measure for Kullback-Leibler divergence. 

As for examples of non-log-concave distributions, consider p.m.f. of the form $C e^{-\lambda k^{\alpha}}$, for $\alpha \in (0,1)$.

As for an example of a log-concave distribution that does not belong to $\mathcal{Q}(a,c)$, consider the discrete uniform distribution on $\{-m ,\dots, m\}$ with $m > (c-1)/2$.

\end{remark}

The next result provides a comparison between total variation distance and $f$-divergences. The result is general as it holds for arbitrary convex function $f$, however the statement is not in a closed form formula. We state it as a lemma, and then apply it to two specific convex functions, yielding a comparison with Kullback-Leibler divergence and $\chi^2$-divergence.

\begin{lemma}\label{f-div}

Let $f \colon [0, +\infty) \to \R$ be a convex function such that $f(1) = 0$. Let $a > 0$ and $c \geq 1$. Let $\nu$ be a measure on $\Z$ whose p.m.f. $q$ belongs to the class $\mathcal{Q}(a,c)$. Let $\mu$ be a measure on $\Z$ with p.m.f. $p$ whose support is included in the support of $\nu$. Then, denoting by $Y$ a random variable with distribution $\mu$ and $W=p(Y)/q(Y)$,
\begin{multline*}
d_f(\mu||\nu) \leq \inf_{R \geq c} \left[ \left(\max\{f(0),0\} + \frac{f(R)}{R-1} \right) d_{TV}(\mu, \nu) \, + \right. \\ \left. \sqrt{\E \left[ \left(\frac{f(W)}{W} \right)^2 1_{\{W > 1\}} \right] \P \left( |Y| > \sqrt{\frac{1}{a} \log \left( \frac{R}{c} \right)} \right)} \right].
\end{multline*}

\end{lemma}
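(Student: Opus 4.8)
The plan is to expand the $f$-divergence as a single sum over $\Z$, split it at the level $R$ of the likelihood ratio, and then treat the two pieces with completely different tools: Cauchy--Schwarz together with a tail estimate coming from the class $\mathcal{Q}(a,c)$ for the ``large ratio'' piece, and convexity (chord bounds) together with the total variation identity for the ``moderate ratio'' piece.

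First I would fix $R \geq c$ and write $d_f(\mu \| \nu) = \sum_{k \in \Z} q(k) f(w(k))$, where $w(k) = p(k)/q(k)$ is the likelihood ratio (with the usual convention that a term vanishes when $q(k)=0$, and with $W = p(Y)/q(Y) = w(Y)$ for $Y \sim \mu$). Since $q(k) f(w(k)) = p(k)\, f(w(k))/w(k)$ whenever $p(k) > 0$, the portion of the sum over $\{w(k) > R\}$ equals $\E[\frac{f(W)}{W} 1_{\{W > R\}}]$, while the portion over $\{w(k) \leq R\}$ is $\sum_{k : w(k) \leq R} q(k) f(w(k))$ and absorbs the atoms with $p(k) = 0$ (each contributing $q(k) f(0)$).

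For the large-ratio piece I would apply Cauchy--Schwarz,
\[
\E\Big[\tfrac{f(W)}{W} 1_{\{W > R\}}\Big] \leq \sqrt{\E\Big[\big(\tfrac{f(W)}{W}\big)^2 1_{\{W > R\}}\Big]\, \P(W > R)},
\]
and, since $R \geq c \geq 1$, enlarge the indicator in the second-moment factor to $1_{\{W > 1\}}$. The crucial point --- and the step I expect to be the main obstacle --- is converting the tail of the likelihood ratio into a tail of $|Y|$: because $p(Y) \leq 1$ always and $q \in \mathcal{Q}(a,c)$ gives $1/q(k) \leq c\, e^{a k^2}$, one has $W = p(Y)/q(Y) \leq 1/q(Y) \leq c\, e^{a Y^2}$, whence $\{W > R\} \subseteq \{c e^{a Y^2} > R\} = \{|Y| > \sqrt{\frac{1}{a} \log(R/c)}\}$ and so $\P(W > R) \leq \P(|Y| > \sqrt{\frac{1}{a} \log(R/c)})$. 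By monotonicity of the square root this produces exactly the second term in the statement.

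For the moderate-ratio piece I would exploit convexity of $f$ with $f(1) = 0$. The chord from $0$ to $1$ gives $f(x) \leq (1-x) f(0) \leq (1-x)\max\{f(0),0\}$ on $[0,1]$, and the chord from $1$ to $R$ gives $f(x) \leq \frac{f(R)}{R-1}(x-1)$ on $[1,R]$; combining, $f(x) \leq \max\{f(0),0\}(1-x)_+ + \frac{f(R)}{R-1}(x-1)_+$ for $x \in [0,R]$. Multiplying by $q(k)$ and using $q(k)(1-w(k))_+ = (q(k)-p(k))_+$ and $q(k)(w(k)-1)_+ = (p(k)-q(k))_+$, then summing over $\{w(k) \leq R\}$ and invoking $\sum_k (q-p)_+ = \sum_k (p-q)_+ = \frac{1}{2} d_{TV}(\mu,\nu)$ (from the identity $d_{TV} = \sum_k |p-q|$), I would bound the moderate part by $(\max\{f(0),0\} + \frac{f(R)}{R-1}) d_{TV}(\mu,\nu)$. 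Adding the two pieces and taking the infimum over $R \geq c$ completes the proof. I would finally note that the chord bound on $[1,R]$ is cleanest when $\frac{f(R)}{R-1} \geq 0$, which holds for the divergences of interest such as $f(x) = x\log x$ and $f(x) = (x-1)^2$.
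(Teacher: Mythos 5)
Your proof is correct and is essentially the paper's own argument: the paper splits $\E_\nu[f(X)]$, with $X = p(Z)/q(Z)$ and $Z\sim\nu$, into the three pieces $\{X<1\}$, $\{1\leq X\leq R\}$, $\{X>R\}$, bounds the first two via chord bounds together with the identity $\E[|X-1|]=d_{TV}(\mu,\nu)$, and treats the third exactly as you do (change of measure to $\mu$, Cauchy--Schwarz with the indicator enlarged to $\{W>1\}$, then the tail bound $W \leq p(Y)/q(Y) \leq c\,e^{aY^2}$), so your two-way split with positive parts and the $\tfrac{1}{2}d_{TV}$ identity is only a cosmetic variant. The sign caveat you flag about $f(R)/(R-1)$ is equally implicit in the paper's step bounding $\frac{f(R)}{R-1}\E[(X-1)1_{\{1\leq X\leq R\}}]$ by $\frac{f(R)}{R-1}d_{TV}(\mu,\nu)$, so it is a shared subtlety rather than a gap relative to the paper.
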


\begin{proof}
The idea of proof comes from \cite{MM} (see also \cite{MP}). Denote by $p$ (resp. $q$) the p.m.f. of $\mu$ (resp. $\nu$). Denote by $Y$ a random variable with p.m.f. $p$, by $Z$ a random variable with p.m.f. $q$, and denote
$$ X = \frac{p(Z)}{q(Z)}, \qquad W = \frac{p(Y)}{q(Y)}. $$
Using identity \eqref{d-tv-dis}, one has
\begin{equation}\label{tot-var}
\E[|X-1|] = d_{TV}(\mu, \nu).
\end{equation}
Let $R \geq 1$ and write
$$ d_f(\mu||\nu) = \E[f(X)] = \E[f(X) 1_{\{X < 1\}}] + \E[f(X) 1_{\{1 \leq X \leq R\}}] + \E[f(X) 1_{\{X > R\}}]. $$
Let us bound all three parts. For the first part, since $f$ is convex and $f(1) = 0$, it holds that for all $x \in [0,1]$, 
$$ f(x) \leq f(0)|x-1| \leq \max\{f(0),0\} |x-1|. $$ Therefore, using \eqref{tot-var},
\begin{equation}\label{ineq-1}
\E[f(X) 1_{\{X < 1\}}] \leq \max\{f(0),0\} \E[|X-1| 1_{\{X < 1\}}] \leq \max\{f(0),0\} d_{TV}(\mu, \nu).
\end{equation}
For the second part, since $f$ is convex and $f(1) = 0$, it holds that for all $x \in [1,R]$,
$$ f(x) \leq \frac{f(R)}{R-1} (x-1). $$
Hence, using \eqref{tot-var},
\begin{equation}\label{ineq-2}
\E[f(X) 1_{\{1 \leq X \leq R\}}] \leq \frac{f(R)}{R-1} \E[(X-1) 1_{\{1 \leq X \leq R\}}] \leq \frac{f(R)}{R-1} d_{TV}(\mu, \nu).
\end{equation}
For the last part, note that
\begin{equation}\label{ineq-33}
\E[f(X) 1_{\{X > R\}}] = \E \left[ \frac{f(W)}{W} 1_{\{W > R\}} \right] \leq \sqrt{\E \left[ \left(\frac{f(W)}{W} \right)^2 1_{\{W > 1\}} \right]} \sqrt{\P(W > R)},
\end{equation}
where we used the Cauchy-Schwarz inequality. It remains to upper bound $\P(W > R)$. Using that $q \in \mathcal{Q}(a,c)$ and $\|p\|_{\infty} \leq 1$, we have for all $R \geq c$,
\begin{eqnarray}\label{ineq-3}
\P(W > R) = \P \left( \frac{p(Y)}{q(Y)} > R \right) \leq \P \left( \log(c) + a Y^2 > \log(R) \right) = \P \left( |Y| > \sqrt{\frac{1}{a} \log \left( \frac{R}{c} \right)} \right).
\end{eqnarray}
The result follows by combining \eqref{ineq-1}, \eqref{ineq-2}, \eqref{ineq-33}, and \eqref{ineq-3}, and by taking infimum over all $R \geq c$.
\end{proof}

Applying Lemma \ref{f-div} to the convex function $f(x)=x\log(x)$, $x \geq 0$, yields a comparison between total variation distance and Kullback-Leibler divergence.

\begin{theorem}\label{divergence}
Let $a > 0$ and $c \geq 2$. Let $\nu$ be a measure on $\Z$ whose p.m.f. belongs to the class $\mathcal{Q}(a,c)$. Let $\mu$ be a log-concave measure on $\Z$ whose support is included in the support of $\nu$. Then,
\begin{equation}\label{div-eq}
D(\mu || \nu) \leq d_{TV}(\mu, \nu) \left(32 a (2\sqrt{\Var(Y)} + 1)^2 \log^2 \left( \frac{ \sqrt{2}( A + B)}{d_{TV}(\mu, \nu)} \right) + 2 \log(c) + 1 \right),
\end{equation}
where $Y$ denotes a random variable with distribution $\mu$, and
$$ A = \log(c) + 10a + 41a\Var(Y), $$
$$ B = \frac{5}{2} + (1 + 12\Var(Y))^{\frac{1}{4}} \log\left( 2\pi e \left( \Var(Y) + \frac{1}{12} \right) \right). $$
\end{theorem}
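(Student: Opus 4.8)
The plan is to apply Lemma~\ref{f-div} with the convex function $f(x)=x\log x$ and then reduce every ingredient of that general bound to the explicit quantities $A$ and $B$. For this $f$ we have $f(0)=0$, so $\max\{f(0),0\}=0$; moreover $\frac{f(R)}{R-1}=\frac{R\log R}{R-1}\le 2\log R$, since $R\ge c\ge 2$ forces $\frac{R}{R-1}\le 2$; and $\frac{f(W)}{W}=\log W$. Thus Lemma~\ref{f-div} specializes to
\[
D(\mu\|\nu)\le\inf_{R\ge c}\Big[2\log R\,\,d_{TV}(\mu,\nu)+\sqrt{\E[\log^2 W\,1_{\{W>1\}}]}\,\sqrt{\P\big(|Y|>\sqrt{\tfrac1a\log(R/c)}\big)}\Big],
\]
and the remaining work is to bound the three factors and optimize in $R$.

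First I would bound the second-moment factor. Writing $\log W=\log(1/q(Y))-\log(1/p(Y))$ and using Minkowski's inequality gives $\sqrt{\E[\log^2 W\,1_{\{W>1\}}]}\le\sqrt{\E[\log^2 W]}\le\|\log(1/q(Y))\|_2+\|\log(1/p(Y))\|_2$. The term $\|\log(1/p(Y))\|_2=\sqrt{\E[\log^2 p(Y)]}$ is controlled by Lemma~\ref{second-bound}, into which I feed the maximal-mass bound $\|p\|_\infty^{-1}\le\sqrt{1+12\Var(Y)}$ from Lemma~\ref{maximum-bound} and the entropy bound of Lemma~\ref{bound-ent}; taking a square root and using $\sqrt{u+v}\le\sqrt u+\sqrt v$ yields exactly the quantity $B$ (with the constant $2\sqrt{4e^{-2}+1}\le 5/2$). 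For $\|\log(1/q(Y))\|_2$ I use that $q\in\mathcal{Q}(a,c)$, so $0\le\log(1/q(Y))\le aY^2+\log c$, whence $\|\log(1/q(Y))\|_2\le a\sqrt{\E[Y^4]}+\log c$; bounding $\E[Y^4]$ by Lemma~\ref{moment-lem} (with $\beta=4$) together with $\E[|Y|]\le\sqrt{\Var(Y)}$ produces the quantity $A$. Hence $\sqrt{\E[\log^2 W\,1_{\{W>1\}}]}\le A+B$.

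Next I control the tail via the concentration bound of Lemma~\ref{concentration-lem}: with $t=\sqrt{\tfrac1a\log(R/c)}$ and $\E[|Y|]\le\sqrt{\Var(Y)}$,
\[
\P(|Y|>t)\le 2e^{-\frac{t}{2(2\sqrt{\Var(Y)}+1)}},\qquad\text{so}\qquad\sqrt{\P(|Y|>t)}\le\sqrt2\,e^{-\frac{1}{4(2\sqrt{\Var(Y)}+1)}\sqrt{\frac1a\log(R/c)}}.
\]
Substituting the two estimates leaves a single free parameter $R$, and the final step is to optimize it. Setting $u=\sqrt{\tfrac1a\log(R/c)}$ turns the right-hand side into $2(\log c+au^2)d_{TV}+\sqrt2(A+B)e^{-u/(4(2\sqrt{\Var(Y)}+1))}$; I would choose $u$ so that the exponential term equals $d_{TV}$, i.e.\ $u=4(2\sqrt{\Var(Y)}+1)\log\frac{\sqrt2(A+B)}{d_{TV}}$ (nonnegative because $A+B>d_{TV}/\sqrt2$, since $d_{TV}\le 2$ while $B\ge 5/2$). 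Collecting terms then gives the factor $32a(2\sqrt{\Var(Y)}+1)^2\log^2(\cdots)$ from $2au^2$, the $2\log c$ from $2\log R$, and the extra $+1$ from the balanced exponential term, which is exactly the claimed inequality.

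The conceptual skeleton --- Lemma~\ref{f-div} plus an exponential-versus-logarithm balance in $R$ --- is routine once Lemma~\ref{f-div} is granted, so the main obstacle is the bookkeeping that converts the abstract factors into the closed forms $A$ and $B$. Assembling $B$ requires chaining the three distinct estimates of Lemmas~\ref{second-bound},~\ref{maximum-bound},~\ref{bound-ent}, while producing $A$ requires \emph{linearizing} $\E[Y^4]$ --- which Lemma~\ref{moment-lem} controls only through $(2\E[|Y|]+1)^4$ --- into an affine function of $\Var(Y)$, i.e.\ verifying $\sqrt{\Gamma(5)}\,(2\sqrt{\Var(Y)}+1)^2\le 10+41\Var(Y)$ by an AM--GM absorption of the $\sqrt{\Var(Y)}$ cross term. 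I also note that the centering $\E[Y]=0$ is used both in the moment comparison $\E[|Y|]\le\sqrt{\Var(Y)}$ (to identify the raw and central fourth moments) and in applying Lemma~\ref{concentration-lem} around $0$.
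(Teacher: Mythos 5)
Your proposal is correct and takes essentially the same approach as the paper's proof: apply Lemma~\ref{f-div} with $f(x)=x\log x$, bound $\E[\log^2 W]$ by the triangle inequality combined with Lemmas~\ref{second-bound}, \ref{maximum-bound}, \ref{bound-ent}, \ref{moment-lem}, bound the tail via Lemma~\ref{concentration-lem}, and make the same balancing choice of $R$ that produces the terms $32a(2\sqrt{\Var(Y)}+1)^2\log^2(\cdot)$, $2\log c$, and $+1$. Your only deviations are cosmetic (Minkowski on $\|aY^2+\log c\|_2$ instead of expanding $\E[(\log c+aY^2)^2]$, and replacing the exact second-moment bounds by $A+B$ before rather than after optimizing), they yield the same constants, and your observation that centering of $\mu$ is implicitly needed matches the paper's own implicit use of it.
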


\begin{proof}
Recall the notation $W=p(Y)/q(Y)$ from Lemma \ref{f-div}. With the choice of convex function $f(x) = x\log(x)$, $x \geq 0$, Lemma \ref{f-div} tells us that for all $R \geq c \geq 2$,
$$ D(\mu||\nu) \leq  2\log(R) d_{TV}(\mu, \nu) + \sqrt{\E[|\log(W)|^{2}]} \sqrt{\P \left( |Y| > \sqrt{\frac{1}{a} \log \left( \frac{R}{c} \right)} \right)}. $$
Using Lemma \ref{concentration-lem} and the bound $\E[|Y|] \leq \sqrt{\Var(Y)}$, we deduce
$$ D(\mu||\nu) \leq  2\log(R) d_{TV}(\mu, \nu) + \sqrt{\E[|\log(W)|^{2}]} \sqrt{2}e^{-\frac{1}{4} \frac{1}{2\sqrt{\Var(Y)} + 1} \sqrt{\frac{1}{a} \log \left( \frac{R}{c} \right)} }. $$
Next, let us upper bound the term $\E[|\log(W)|^{2}]^{1/2}$. On one hand, since $q \in \mathcal{Q}(a,c)$,
\begin{eqnarray}\label{first}
\E[|\log(q(Y))|^{2}] & \leq & \E[\left( \log(c) + a Y^2 \right)^2] \nonumber \\ & = & \log^2(c) + 2a \log(c) \Var(Y) + a^2 \E[Y^4] \nonumber \\ & \leq & \log^2(c) + 2a \log(c) \Var(Y) + 24 a^2 (2\sqrt{\Var(Y)} + 1)^4 \nonumber \\ & := & \widetilde{A},
\end{eqnarray}
where we used Lemma \ref{moment-lem} in the last inequality. On the other hand, by Lemma \ref{second-bound},
\begin{eqnarray}\label{second}
\E[|\log(p(Y))|^2] & \leq & 4 \left( 4 e^{-2} + 1 + \frac{H^2(X)}{\|p\|_{\infty}} \right) \nonumber \\ & \leq & 4 \left( 4 e^{-2} + 1 + \frac{1}{4} \log^2\left( 2\pi e \left( \Var(Y) + \frac{1}{12} \right) \right) \sqrt{1 + 12\Var(Y)} \right) \nonumber \\ & := & \widetilde{B},
\end{eqnarray}
where we used Lemmas \ref{maximum-bound} and \ref{bound-ent} in the last inequality. Therefore, combining \eqref{first} and \eqref{second},
\begin{eqnarray}\label{1st}
\E[|\log(W)|^{2}]^{\frac{1}{2}} \leq \E[|\log(p(Y))|^2]^{\frac{1}{2}} + \E[|\log(q(Y))|^{2}]^{\frac{1}{2}} \leq \sqrt{\widetilde{A}} + \sqrt{\widetilde{B}},
\end{eqnarray}
implying
$$ D(\mu || \nu) \leq 2\log(R) d_{TV}(\mu, \nu) + (\sqrt{\widetilde{A}} + \sqrt{\widetilde{B}}) \sqrt{2}e^{-\frac{1}{4} \frac{1}{2\sqrt{\Var(Y)} + 1} \sqrt{\frac{1}{a} \log \left( \frac{R}{c} \right)} }. $$
Putting $t =  \sqrt{ \log(\frac{R}{c})}$, the above inequality reads
$$ D(\mu || \nu) \leq 2\log(c) d_{TV}(\mu, \nu) + 2 d_{TV}(\mu, \nu) t^2 + (\sqrt{\widetilde{A}} + \sqrt{\widetilde{B}}) \sqrt{2}e^{-\frac{1}{4} \frac{1}{2\sqrt{\Var(Y)} + 1} \frac{t}{\sqrt{a}} }. $$
Choosing $t = 4(2\sqrt{\Var(Y)} + 1) \sqrt{a} \log \left( \frac{ \left( \sqrt{\widetilde{A}} + \sqrt{\widetilde{B}} \right) \sqrt{2} }{d_{TV}(\mu, \nu)} \right)$, which is nonnegative, and using the bounds
$$ \sqrt{\widetilde{A}} \leq \sqrt{(\log(c) + a\Var(Y))^2 + 96 a^2 (4\Var(Y) + 1)^2} \leq \log(c) + a\Var(Y) + 10a(4\Var(Y) + 1), $$
$$ \sqrt{\widetilde{B}} \leq \sqrt{\frac{31}{5}} + (1 + 12\Var(Y))^{\frac{1}{4}} \log\left( 2\pi e \left( \Var(Y) + \frac{1}{12} \right) \right)  $$
yields the desired result.
\end{proof}

\begin{remark}

Theorem \ref{divergence} yields the following bound depending only on $a$ and $c$ for centered log-concave probability measures $\mu$ on $\Z$ whose variance is bounded by 1:
\begin{equation*}
D(\mu || \nu) \leq d_{TV}(\mu, \nu) \left( 288a \log^2 \left( \frac{ \sqrt{2}( 9 + \log(c) + 51a )}{d_{TV}(\mu, \nu)} \right) + 2 \log(c) + 1 \right).
\end{equation*}

\end{remark}

\begin{remark}

A comparison between total variation distance and Kullback-Leibler divergence similar to Theorem \ref{divergence} (with given fixed constants $a > 0$ and $c \geq 2$) does not hold if the reference measure $\nu$ is an arbitrary log-concave measure. For example, consider $\mu$ to be the Dirac measure at 0, and $\nu$ to be the discrete uniform measure on $\{-m, \dots, m\}$, for $m \in \N$. Then, $D(\mu || \nu) = \log(2m + 1) \to +\infty$ as $m \to +\infty$, while the right-hand side of \eqref{div-eq} remains bounded. Nonetheless, we believe the result to be true for arbitrary log-concave measure $\nu$ under a constraint of bounded variance.

\end{remark}

\begin{remark}

In terms of statistical distances, Theorem \ref{divergence} yields a rate of convergence of the form
$$ d_{TV}^2(\mu, \nu) \leq 2 D(\mu || \nu) \leq C \, d_{TV}(\mu, \nu) \log^2(1/d_{TV}(\mu, \nu)), $$
for some constant $C$ depending on $a$, $c$, and the variance of $\mu$. It is still open whether one can remove the term $\log^2(1/d_{TV})$ in Theorem \ref{divergence} and improve the power in the term $d_{TV}$. This remains open as well in the continuous setting, where only the Gaussian measure as the reference measure has been studied.

\end{remark}

As a last illustration, the next result provides a comparison between total variation distance and $\chi^2$-divergence, under an extra moment assumption. For simplicity, we restrict the statement to log-concave measures with bounded variance.

\begin{theorem}
Let $a > 0$ and $c \geq 1$. Let $\nu$ be a measure on $\Z$ whose p.m.f. belongs to the class $\mathcal{Q}(a,c)$. Let $\mu$ be a centered log-concave measure on $\Z$ with variance bounded by 1 and whose support is included in the support of $\nu$. Under the moment assumption
$$ \E[e^{2aY^2}] < +\infty, $$
where $Y$ denotes a random variable with distribution $\mu$, one has
$$ d_{\chi^2}(\mu || \nu) \leq c \left( d_{TV}(\mu, \nu) + \sqrt{d_{TV}(\mu, \nu)} \right) + c \sqrt{\E[e^{2a Y^2}]} \sqrt{2 }e^{-\frac{1}{12} \sqrt{\frac{1}{a} \log \left( 1 + \frac{1}{\sqrt{d_{TV}(\mu, \nu)}} \right)} }. $$
\end{theorem}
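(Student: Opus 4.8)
The plan is to apply Lemma \ref{f-div} to the convex function $f(x) = (x-1)^2$, $x \geq 0$, whose associated $f$-divergence is exactly the $\chi^2$-divergence, and then to optimize the free parameter $R$. First I would record the elementary quantities appearing in the lemma for this choice of $f$: since $f(0) = 1$ and $f(R)/(R-1) = R-1$, the coefficient multiplying $d_{TV}(\mu,\nu)$ is $\max\{f(0),0\} + f(R)/(R-1) = R$, producing the linear term $R\,d_{TV}(\mu,\nu)$.

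The next step is to control the integrand of the tail term, $\left(f(W)/W\right)^2 1_{\{W>1\}}$, where $W = p(Y)/q(Y)$. Writing $f(W)/W = (W-1)^2/W$, on the event $\{W > 1\}$ one has $(W-1)^2/W \leq W$, hence $\left(f(W)/W\right)^2 1_{\{W>1\}} \leq W^2$. This is where the moment hypothesis enters: because $q \in \mathcal{Q}(a,c)$ and $p(k) \leq 1$, on the support of $\mu$ we have $W = p(Y)/q(Y) \leq c\,e^{aY^2}$, so that $\E[W^2] \leq c^2\,\E[e^{2aY^2}]$, which is finite precisely by assumption. Thus $\sqrt{\E[(f(W)/W)^2 1_{\{W>1\}}]} \leq c\sqrt{\E[e^{2aY^2}]}$.

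For the probability factor, I would invoke the concentration estimate \eqref{concentration-iso} from Remark \ref{second-iso}, valid since $\mu$ is centered with variance at most $1$, namely $\P(|Y| > t) \leq 2 e^{-t/6}$. Applying it with $t = \sqrt{a^{-1}\log(R/c)}$ and taking the square root turns the tail contribution into $c\sqrt{\E[e^{2aY^2}]}\,\sqrt{2}\,e^{-\frac{1}{12}\sqrt{a^{-1}\log(R/c)}}$. Combining the two pieces, Lemma \ref{f-div} yields, for every $R \geq c$,
\begin{equation*}
d_{\chi^2}(\mu||\nu) \leq R\,d_{TV}(\mu,\nu) + c\sqrt{\E[e^{2aY^2}]}\,\sqrt{2}\,e^{-\frac{1}{12}\sqrt{\frac{1}{a}\log(R/c)}}.
\end{equation*}

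Finally, the free parameter is chosen to reproduce the two terms of the target bound: taking $R = c\left(1 + 1/\sqrt{d_{TV}(\mu,\nu)}\right) \geq c$ gives $R\,d_{TV}(\mu,\nu) = c\left(d_{TV}(\mu,\nu) + \sqrt{d_{TV}(\mu,\nu)}\right)$ and $R/c = 1 + 1/\sqrt{d_{TV}(\mu,\nu)}$, which is exactly the asserted inequality. The only genuinely delicate step is the middle one, verifying that the Cauchy-Schwarz tail term is controlled by $\E[e^{2aY^2}]$, since this is where the membership $q \in \mathcal{Q}(a,c)$, the bound $p \leq 1$, and the extra moment assumption must be combined; the remaining steps amount to a direct substitution into Lemma \ref{f-div} and an optimization in $R$.
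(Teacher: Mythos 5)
Your proposal is correct and follows essentially the same route as the paper's proof: apply Lemma \ref{f-div} with $f(x)=(x-1)^2$ (coefficient $R$ on the total variation term), bound $\left(f(W)/W\right)^2 1_{\{W>1\}} \leq W^2 \leq c^2 e^{2aY^2}$ using $p \leq 1$ and $q \in \mathcal{Q}(a,c)$, control $\P\left(|Y| > \sqrt{a^{-1}\log(R/c)}\right)$ by the concentration bound for centered log-concave measures with variance at most $1$, and finally set $R = c\left(1 + 1/\sqrt{d_{TV}(\mu,\nu)}\right)$. All steps, including the verification that $R \geq c$, match the paper's argument.
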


\begin{proof}
Recall the notation $W=p(Y)/q(Y)$ from Lemma \ref{f-div}. With the choice of convex function $f(x) = (x-1)^2$, $x \geq 0$, Lemma \ref{f-div} tells us that for all $R \geq c$,
$$ d_{\chi^2}(\mu || \nu) \leq R d_{TV}(\mu, \nu) + \sqrt{\E \left[ \frac{(W-1)^4}{W^2} 1_{\{W > 1\}} \right]} \sqrt{2} e^{-\frac{1}{12} \sqrt{\frac{1}{a} \log \left( \frac{R}{c} \right)} }. $$
Note that
$$ \E \left[ \frac{(W-1)^4}{W^2} 1_{\{W > 1\}} \right] \leq \E \left[ W^2 \right] \leq \E \left[ \frac{1}{q^2(Y)} \right] \leq c^2 \E[e^{2a Y^2}], $$
where the last inequality comes from $q \in \mathcal{Q}(a,c)$. Therefore,
$$ d_{\chi^2}(\mu || \nu) \leq R d_{TV}(\mu, \nu) + c \sqrt{\E[e^{2a Y^2}]} \sqrt{2 }e^{-\frac{1}{12} \sqrt{\frac{1}{a} \log \left( \frac{R}{c} \right)} }. $$
Choosing $R = c \left(1 + \frac{1}{\sqrt{d_{TV}(\mu, \nu)}} \right)$ yields the desired result.
\end{proof}

\section*{Acknowledgments.}
The authors are grateful to the referee for their valuable comments.

\vskip3cm

\noindent Arnaud Marsiglietti \\
Department of Mathematics \\
University of Florida \\
Gainesville, FL 32611, USA \\
a.marsiglietti@ufl.edu

\vspace{0.8cm}

\noindent Puja Pandey \\
Department of Statistics and Applied Probability \\
University of California Santa Barbara \\
Goleta, CA 93106, USA \\
pujapandey@ucsb.edu

\end{document}